\numberwithin{equation}{section}
\newtheorem{theorem}{Theorem}[section]
\newtheorem{lemma}[theorem]{Lemma}
\newtheorem{proposition}[theorem]{Proposition}
\newtheorem{corollary}[theorem]{Corollary}
\theoremstyle{definition}
\newtheorem{definition}[theorem]{Definition}
\newtheorem{example}[theorem]{Example}
\theoremstyle{remark}
\newtheorem{remark}[theorem]{Remark}
\newcommand{\sA}{\mathcal{A}}
\newcommand{\sE}{\mathcal{E}}
\newcommand{\sF}{\mathcal{F}}
\newcommand{\sO}{\mathcal{O}}
\newcommand{\Oh}{\mathcal{O}}
\newcommand{\K}{\mathbb{K}}
\newcommand{\Z}{\mathbb{Z}}
\newcommand{\N}{\mathbb{N}}
\newcommand{\C}{\mathbb{C}}
\newcommand{\HH}{\mathbb{H}}
\newcommand{\de}{\partial}
\newcommand{\debar}{\overline{\partial}}
\newcommand{\Def}{\operatorname{Def}}
\newcommand{\MC}{\operatorname{MC}}
\newcommand{\End}{\operatorname{End}}
\newcommand{\Ext}{\operatorname{Ext}}
\newcommand{\EExt}{\mathbb{E}\operatorname{xt}}
\newcommand{\Hom}{\operatorname{Hom}}
\newcommand{\Spec}{\operatorname{Spec}}
\newcommand{\Id}{\operatorname{Id}}
\newcommand{\Tr}{\operatorname{Tr}}
\newcommand{\At}{\operatorname{At}}
\newcommand{\tr}{\operatorname{tr}}
\newcommand{\ch}{\operatorname{ch}}
\newcommand{\cs}{\operatorname{cs}}
\newcommand{\HOM}{\mathcal{H}om}
\newcommand{\treq}{\stackrel{\operatorname{tr}}{=}}
\title{$L_{\infty}$ liftings of semiregularity maps via Chern--Simons classes}
\author{Ruggero Bandiera}
\address{
Universit\`a degli studi di Roma La Sapienza,
Dipartimento di Matematica  Guido
Castelnuovo, P.le Aldo Moro 5,
I-00185 Roma, Italy.}
\email{bandiera@mat.uniroma1.it}
\author{Emma Lepri}
\address{School of Mathematics and Statistics,
University of Glasgow,
University Place,
Glasgow G12 8QQ, UK.}
\email{Emma.Lepri@glasgow.ac.uk}
\author{Marco Manetti}
\address{Universit\`a degli studi di Roma La Sapienza,
Dipartimento di Matematica  Guido
Castelnuovo,
P.le Aldo Moro 5,
I-00185 Roma, Italy.}
\email{marco.manetti@uniroma1.it}
\date{6 September, 2023}
\subjclass[2020]{14D15, 17B70}
\keywords{curved DG-algebras, Chern--Simons forms, Atiyah class, L-infinity maps, semiregularity}
\begin{document}

\maketitle

\begin{abstract} We introduce the notion of Chern--Simons classes for curved DG-pairs and we prove that a particular case of this general construction provides  canonical $L_\infty$ liftings of Buchweitz--Flenner semiregularity maps 
for coherent sheaves on complex manifolds.
\end{abstract}

\section{Introduction}

In this short introduction we only try to explain some of the motivations underlying this paper and we refer to Section~\ref{sec.curvedpairs} for a  presentation of the main results.

Semiregularity maps have a long and interesting history starting with Severi \cite{severi}, who called a curve $C$ in a surface $S$  semiregular  if the natural  map $H^0(S,\Omega^2_S)\to H^0(C,\Omega^2_S|_C)$ is surjective and proved, in modern terminology, that the Hilbert scheme of $S$ is smooth at every semiregular curve. 
The analogous result for smooth hypersurfaces in a compact complex manifold was proved by Kodaira and Spencer 
\cite[p. 482]{kodaspen}. Nowadays the above results are usually expressed in terms of semiregularity maps: 
if $Z$ is a (possibly singular) hypersurface in a smooth manifold $X$, its semiregularity map is the morphism
\[ \sigma\colon H^1(Z,N_{Z|X})\to H^2(X,\Oh_X)\]
induced by the short exact sequence $0\to \Oh_X\to \Oh_X(Z)\to N_{Z|X}\to 0$.  
Clearly $Z$ is semiregular if and only if the semiregularity map is injective and it is easy to extend Kodaira--Spencer's theorem to the following more general statement: \emph{the obstructions to embedded deformations of $Z$ in $X$ are contained in the kernel of $\sigma$}, see e.g. \cite[Thm. 8.1.5]{LMDT}.

When $Z$ is a locally  complete intersection of codimension $p$ in a smooth projective variety $X$, Bloch 
was able to define a semiregularity map $\sigma\colon H^1(Z,N_{Z|X})\to H^{p+1}(X,\Omega_X^{p-1})$ and prove, by using variations of Hodge structures, that every simple obstruction to embedded deformations of $Z$ in $X$ is annihilated by $\sigma$ \cite{bloch}.
By general deformation theory, see e.g. \cite[Example 11.0.1]{Ha10}, the cohomology group $H^1(Z,N_{Z|X})$ is an obstruction space for the Hilbert functor of the closed immersion $Z\subset X$. This means that for every small extension of finitely generated local Artin $\C$-algebras $0\to \C\to A\to B\to 0$
and every embedded deformation of $Z$ over $\Spec(B)$, there exists a canonical obstruction  
$u\in H^1(Z,N_{Z|X})$, which vanishes if and only if the deformation lifts to $\Spec(A)$. 

The obstruction $u$ as above is called \emph{simple} if the differential map $d\colon \C\to \Omega_{A/\C}\otimes_A B$ is injective. It is plain that the notion of simple obstruction makes sense for every functor of Artin rings admitting an obstruction theory. 

Although for general deformation problems the simple obstructions are a proper class of all obstructions 
\cite{Man,LMDT}, in characteristic 0 their vanishing is sufficient to ensure smoothness and hence, if the semiregularity map is injective, then $Z$ has unobstructed embedded deformations in $X$.  

Bloch's paper leaves unanswered the question of whether every obstruction belongs to the kernel of the semiregularity map: a positive answer to this question was given in \cite{ManettiSemireg} for smooth subvarieties of compact K\"{a}hler manifolds and in \cite{semireg} for globally complete intersection subvarieties.

A major breakthrough in the theory of semiregularity maps was given  by Buchweitz and Flenner in the paper \cite{BF}. 
They consider, as a special case of a more general construction, for every coherent sheaf $\sF$ on a complex manifold $X$ the sequence of maps 
\[ \tau_k\colon \Ext^2_X(\sF,\sF)\to H^{k+2}(X,\Omega_X^{k}),\quad
\tau_k(x)=\frac{(-1)^k}{k!}\Tr(\At(\sF)^kx),\qquad k\ge 0,\]
where $\At(\sF)\in \Ext^1_X(\sF,\sF\otimes\Omega^1_X)$ is the Atiyah class of $\sF$,
called \emph{semiregularity maps} of $\sF$; this name is motivated by the fact,  proved in \cite{BF}, that when $\sF=\Oh_Z$ with $Z$ a locally complete intersection of codimension $p$, then Bloch's semiregularity map $\sigma$ is the composition of $\tau_{p-1}$ with the natural 
map $H^1(Z,N_{Z|X})\to \Ext^2_X(\Oh_Z,\Oh_Z)$. 

The space $\Ext^2_X(\sF,\sF)$ has a natural interpretation as an obstruction space for deformations of $\sF$ and, in the same paper, Buchweitz and Flenner proved that if $X$ is projective then every semiregularity map annihilates simple obstructions. As regards the annihilation  of all obstructions, the natural strategy suggested in \cite{BF} is to interpret each semiregularity map as the obstruction map of a morphism of deformation theories with unobstructed target, and this paper goes exactly in this direction.

Among the possible formal frameworks for the category of deformation problems we use here the one, valid over  fields of characteristic 0, asserting that every deformation problem is controlled by a homotopy class of differential graded Lie algebras and a morphism of deformation theories is a morphism in the derived category of DG-Lie algebras, or equivalently an $L_{\infty}$ morphism.  The above strategy
 is easy to employ for the map $\tau_0$ (see for example \cite{DMcoppie}) and 
we recover a classical theorem by Mukai and Artamkin \cite{Arta}. 
An $L_{\infty}$ lifting of $\tau_1$ depending on the choice of a connection in a resolution of $\sF$  is constructed by explicit computation in \cite{linfsemireg}.

In this paper we extend the construction of \cite{linfsemireg} to every $\tau_k$; here the explicit computation (too difficult to manage for $k>1$) is replaced by an algebraic argument involving Chern--Simons classes of curved DG-algebras.

In conclusion, we reach our initial goal and prove, in particular, the following result:

\begin{corollary}[=Corollary~\ref{cor.main3}] Let $\sF$ be a coherent sheaf on a complex projective manifold  $X$. 
Then for every $k\ge 0$ the semiregularity map 
\[ \tau_k\colon \Ext^2_X(\sF,\sF)\to H^{k+2}(X,\Omega^{k}_X)\]
annihilates all obstructions to deformations of $\sF$.
\end{corollary}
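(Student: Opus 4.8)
The plan is to reinterpret each semiregularity map $\tau_k$ as the obstruction map of an $L_\infty$ morphism of DG-Lie algebras whose target controls a deformation problem that is manifestly unobstructed. Concretely, deformations of the coherent sheaf $\sF$ are governed by a DG-Lie algebra $L$ with $H^2(L)=\Ext^2_X(\sF,\sF)$ as obstruction space, so the statement will follow once I exhibit, for each $k\ge 0$, an $L_\infty$ morphism
\[
g_k\colon L \rightsquigarrow M_k
\]
into a DG-Lie (or abelian DG-Lie) algebra $M_k$ with $H^2(M_k)=H^{k+2}(X,\Omega^k_X)$, such that the induced map on $H^2$ is precisely $\tau_k$, and such that the Maurer-Cartan locus of $M_k$ is smooth. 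I would take $M_k$ to be an abelian DG-Lie algebra, for instance a truncation or twist of the Dolbeault-type complex computing $\bigoplus_q H^{q}(X,\Omega^k_X)$; since an abelian DG-Lie algebra has quadratic-free Maurer-Cartan equation, its associated deformation functor is smooth and hence unobstructed.

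First I would invoke the main body of the paper: the Chern-Simons construction for curved DG-pairs provides exactly such a canonical $L_\infty$ lifting of the Buchweitz-Flenner semiregularity maps, realized through the Chern-Simons classes of a curved DG-algebra resolution of $\sF$ together with an Atiyah-class datum. This is the content advertised in the abstract and Section~\ref{sec.curvedpairs}, and I would cite the corresponding theorem there (the general statement of which the present corollary is the promised special case) to obtain the $L_\infty$ morphism $g_k$ lifting $\tau_k$. Second, I would record the standard principle from the $L_\infty$/deformation-theoretic formalism in characteristic $0$: if $g\colon L\rightsquigarrow M$ is an $L_\infty$ morphism, then the linear component $H^2(g)\colon H^2(L)\to H^2(M)$ sends every obstruction of the deformation functor $\Def_L$ into the image of the obstructions of $\Def_M$; when $M$ is homotopy abelian (as here), $\Def_M$ is unobstructed, so $H^2(g)$ annihilates all obstructions of $\Def_L$. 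Applying this to $g_k$ gives that $\tau_k=H^2(g_k)$ kills all obstructions to deformations of $\sF$.

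The main obstacle, and the technical heart that the rest of the paper must supply, is the verification that the Chern-Simons classes assemble into a genuine $L_\infty$ morphism whose linearization on $H^2$ agrees on the nose with the Buchweitz-Flenner map $\tau_k$. For $k=0$ and $k=1$ this matching was accessible by explicit computation (recovering Mukai-Artamkin and the construction of \cite{linfsemireg}), but for $k>1$ the explicit approach is intractable, and the whole point of the curved DG-algebra machinery is to replace computation by a structural argument identifying the Chern-Simons class with $\tau_k$ up to homotopy. Granting that identification from the earlier sections, the passage to the stated corollary is then formal: projectivity of $X$ guarantees the Hodge-theoretic splittings and the finite-dimensionality needed for the target complex to compute $H^{k+2}(X,\Omega^k_X)$ and to be homotopy abelian, after which the unobstructedness of the target and the general $L_\infty$ principle close the argument.
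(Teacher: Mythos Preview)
Your overall strategy matches the paper's, but there is one concrete inaccuracy in how you describe the target of the $L_\infty$ morphism and the role of projectivity. The $L_\infty$ morphism produced by the Chern--Simons construction (Corollary~\ref{cor.main1} applied as in Section~\ref{sec.connection}) does \emph{not} land in an abelian complex whose $H^2$ is $H^{k+2}(X,\Omega^k_X)$; its target is the abelian DG-Lie algebra $A_X^{\le k,*}[2k]$, whose second cohomology is $\HH^{2k+2}(A_X^{\le k,*})$. On cohomology the linear component therefore induces the composition $j\circ\tau_k$, where $j\colon H^{k+2}(X,\Omega^k_X)\to \HH^{2k+2}(A_X^{\le k,*})$ is the natural map. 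The unobstructedness argument you outline then shows only that $j\circ\tau_k$ annihilates obstructions; this is exactly Corollary~\ref{cor.main2}, valid whenever $\sF$ admits a locally free resolution, with no projectivity assumption needed.

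Projectivity enters \emph{not} to make the target compute $H^{k+2}(X,\Omega^k_X)$, nor to make it homotopy abelian (it is abelian on the nose regardless of $X$), but solely to ensure degeneration of the Hodge--to--de Rham spectral sequence at $E_1$, which makes $j$ injective. Once $j$ is injective, annihilation by $j\circ\tau_k$ implies annihilation by $\tau_k$, and this is precisely the paper's proof of Corollary~\ref{cor.main3}. So your sketch has the right pieces but misplaces where Hodge theory intervenes; as written, the step ``choose $M_k$ with $H^2(M_k)=H^{k+2}(X,\Omega^k_X)$ and $H^2(g_k)=\tau_k$'' cannot be arranged directly from the construction.
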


According to results of Buchweitz and Flenner, the above corollary immediately implies: 

\begin{corollary} Let $Z$ be a codimension $p$ locally complete intersection subvariety of a smooth complex projective manifold 
$X$. Then every obstruction to embedded deformations of $Z$ is contained in the kernel of Bloch's semiregularity map 
$H^1(Z,N_{Z|X})\to H^{p+1}(X,\Omega_X^{p-1})$.
\end{corollary}

It is worthwhile and essential to mention the unpublished paper by Pridham~\cite{Pri}, where a similar result is proved in the framework of derived algebraic geometry (instead of  $L_{\infty}$ algebras) and using
Goodwillie's theorem (instead of Chern--Simons classes).

We refer to \cite{LiePairs} for another application of our main result; namely, we extend the definition of  semiregularity maps for  modules over Lie agebroids on separated schemes of finite type and we prove that 
they annihilates all the obstroctions to deformations.

\bigskip
\section{Curved DG-pairs and statement of the main results}
\label{sec.curvedpairs}

By a graded algebra, we mean a graded associative algebra with unit over a fixed field $\K$ of characteristic $0$.  Every graded associative algebra is also a graded Lie algebra, with the bracket given by 
the graded commutator  $[a,b]=ab-(-1)^{|a||b|}ba$.
For every vector subspace $E\subset A$ we shall denote: by $E^{(k)}$, $k\ge 1$, the linear span of all the products $e_1\cdots e_k$, with $e_i\in E$ for every $i$, and  by 
$EA$ the linear span of all the products $ea$, with  $e\in E$  and $a\in A$.

\begin{definition}[\cite{Posi}]\label{def.curved-algebra} A curved DG-algebra is the datum $(A,d,\cdot,R)$ of a graded associative unitary algebra $(A,\cdot)$ together with a degree one derivation $d\colon A^*\to A^{*+1}$ and a degree two element $R\in A^2$, called \emph{curvature}, such that
	\[ d(R)=0,\qquad d^2(x)=[R,x]=R\cdot x-x\cdot R\quad\,\,\forall\, x\in A. \]
\end{definition}

For notational simplicity we shall write $(A,d,R)$ in place of $(A,d,\cdot,R)$ when the product $\cdot$ is clear from the context. We denote by $[A,A]\subset A$ the \emph{linear span} of all the graded commutators $[a,b]=ab-(-1)^{|a||b|}ba$. Following \cite{GS} we 
call $A/[A,A]$ the \emph{cyclic space} of $A$ and we 
denote by $\tr\colon A\to  A/[A,A]$ the quotient map.  Notice that $[A,A]$ is a homogeneous Lie ideal
and then the cyclic space inherits  a natural structure of DG-Lie algebra with trivial bracket.

\begin{definition}\label{def.curved-ideal} Let $A=(A,d,R)$ be a curved DG-algebra. 
A \emph{curved Lie ideal} in $A$ is a homogeneous Lie ideal 
$I\subset A$  such that $d(I)\subset I$ and $R\in I$. 

By a \emph{curved DG-pair} we mean the data $(A,I)$ of a curved DG-algebra $A$ equipped with a
curved Lie ideal $I$. 
\end{definition}

In particular, for every curved DG-pair $(A,I)$, the quotient $A/I$ is a (noncurved) DG-Lie algebra, the  projection map $\pi\colon A\to A/I$ is a morphism of graded Lie algebras and, for every $k\ge 1$, the subset $I^{(k)}A$ is an associative bilateral ideal of $A$.

\begin{example}
It is useful to briefly anticipate from Section~\ref{sec.connection} the following paradigmatic geometric example of curved DG-pair. Let $E$ be a holomorphic vector bundle on a complex manifold $X$ equipped with a connection compatible with the holomorphic structure and denote by 
$R\in A_X^{1,1}(\End(E))\oplus A_X^{2,0}(\End(E))$ the curvature. Denoting by 
$d$ the induced connection on the associated bundle  $\End(E)$, we have that 
$(A_X^{*,*}(\End(E)),d,R)$ is a curved DG-algebra and $I=A_X^{>0,*}(\End(E))$ is a curved Lie ideal. In this case the DG-Lie algebra  $A/I=A_X^{0,*}(\End(E))$ is the Dolbeault resolution of $\End(E)$ and controls the deformations of the vector bundle $E$. Notice that $I$ is also an associative ideal and $I^{(k)}A=I^{(k)}=A_X^{\ge k,*}(\End(E))$ for every $k>0$.
\end{example}

The classical theory of Atiyah classes and the above example suggest the introduction of the following objects associated with a curved DG-pair. 

\begin{definition}\label{def.atiyahclass} Let $A=(A,d,R)$ be a curved DG-algebra and 
$I\subset A$ a curved  Lie ideal. 
The \emph{Atiyah cocycle} of the pair $(A,I)$ is  the class of $R$ in the DG-vector space 
$\dfrac{I+I^{(2)}A}{I^{(2)}A}$. The \emph{Atiyah class} of the pair $(A,I)$ is the cohomology 
class of the  Atiyah cocycle:
\[ \At(A,I)=[R]\in H^2\left(\dfrac{I+I^{(2)}A}{I^{(2)}A}\right).\]
\end{definition}

\begin{definition}\label{def:semiregmap} Let $A=(A,d,R)$ be a curved DG-algebra and 
$I\subset A$ a curved  Lie ideal. 
For every integer $k\ge 0$, we introduce 
the morphism of complexes of vector spaces 
\[ \sigma^k_1\colon \frac{A}{I}\to \frac{A}{[A,A]+I^{(k+1)}A}[2k],\qquad \sigma^k_1(x)=\frac{1}{k!}\tr(R^{{k}}x).\]
\end{definition}

Notice that $\sigma^k_1$ depends only on the Atiyah cocycle of the pair $(A,I)$, while the induced map in cohomology 
\[ \sigma^k_1\colon H^*\left(\frac{A}{I}\right)\to H^{2k+*}\left(\frac{A}{[A,A]+I^{(k+1)}A}\right),\qquad \sigma^k_1(x)=\frac{1}{k!}\tr(\At(A,I)^{{k}}x),\]
depends only on the Atiyah class. 

The \emph{semiregularity map} of the pair $(A,I)$ is defined as the degree 2 component  
\[ \sigma^k_1\colon H^2\left(\frac{A}{I}\right)\to H^{2k+2}\left(\frac{A}{[A,A]+I^{(k+1)}A}\right),\qquad \sigma^k_1(x)=\frac{1}{k!}\tr(\At(A,I)^{{k}}x),\]
of the above map.

\begin{remark} The name semiregularity map is clearly motivated by the analogous definition by Buchweitz and Flenner
\cite{BF}. More precisely, 
every morphism $\sigma_1^k$  factors as the composition of two morphisms of differential graded vector spaces
\[ \frac{A}{I}\xrightarrow{\;\tau^k_1\;}\frac{I^{(k)}A+[A,A]}{[A,A]+I^{(k+1)}A}\hookrightarrow
\frac{A}{[A,A]+I^{(k+1)}A},\qquad \tau^k_1(x)=\frac{1}{k!}\tr(R^{{k}}x),\]
and the direct generalisation of Buchweitz--Flenner's semiregularity maps should be the maps
induced by $\tau^k_1$ in the group $H^2(A/I)$, up to signs. However, 
several geometric considerations about Abel--Jacobi maps  (see e.g. the introduction of \cite{semireg}) strongly suggest  that, from the point of view of deformation theory, the right objects to consider are the maps $\sigma^k_1$.
\end{remark}

\begin{remark}
	For every $x\in A^1$ one can consider the twisted derivation $d_x=d+[x,-]$ and an easy computation shows that 
$A_x:=(A,d_x,R_x)$ remains a curved DG-algebra with curvature $R_x=R+d(x)+\frac{1}{2}[x,x]$. In particular, if $x\in I$ then 
$I$ is a curved Lie ideal also for $A_x$, the derivations $d,d_x$ induce the same differential in $A/I$ and $A/[A,A]$, the difference
$R_x-R$ is exact in $A/[A,A]$ and therefore the semiregularity maps of the pairs $(A,I)$ and $(A_x,I)$ induce the same map in cohomology.

\end{remark}

Since  $[A,A]+I^{(k+1)}A$ is a Lie ideal,  the space 
$\dfrac{A}{[A,A]+I^{(k+1)}A}[2k]$ inherits from $A$ a structure of DG-Lie algebra with trivial bracket and it is obvious that $\sigma^0_1$ is a morphism of DG-Lie algebras. 
It is easy to see that in general $\sigma^k_1$ is not a morphism of DG-Lie algebras for  
$k>0$. It is therefore natural to ask whether $\sigma^k_1$ is the linear component of an $L_{\infty}$ morphism.

In Section~\ref{sec.lifting} we prove the following result.

\begin{theorem}[=Corollary~\ref{cor.semireg}]\label{thm.main1} Let $I$ be a curved Lie ideal of a curved DG-algebra $(A,d,R)$ and let $\pi$ denote the projection $A\to A/I$.  
For every $k\ge 0$, one can associate, in a canonical fashion, an $L_{\infty}$ morphism 
\[ \sigma^k\colon \frac{A}{I}\rightsquigarrow \dfrac{A}{[A,A]+I^{(k+1)}A}[2k]\] having the map $\sigma^k_1$ as its linear component with every choice of morphism of graded vector spaces  $s\colon A/I\to A$ such that $\pi s=\Id_{A/I}$.
\end{theorem}



The proof of the above theorem is constructive and an explicit description of the higher components 
of $\sigma^k$ is possible but rather cumbersome for general sections $s$. 
In Section~\ref{sec.explicit} we study the higher components of the 
$L_{\infty}$ morphism  of  Theorem~\ref{thm.main1} under the additional assumption that $s$ is a morphism of graded Lie algebras. This hypothesis is satisfied in most of the  
applications and has the effect of a dramatic simplification of the algebraic and combinatorial 
aspects.

\begin{definition}[\cite{Quillen88}]\label{def.tracemap} A \emph{trace map} on a   curved DG-algebra $(A,d,R)$ is the data of a complex of vector 
spaces $(C,\delta)$ and a morphism of graded vector spaces $\Tr\colon A\to C$ such that $\Tr\circ  d=\delta\circ \Tr$ and $\Tr([A,A])=0$.
\end{definition}
  
Thus every trace map $\Tr\colon A\to C$ factors to a morphism of abelian DG-Lie algebras 
$A/[A,A]\to C$ and we have the following immediate consequence of the above theorem.

\begin{corollary}\label{cor.main1} Let $I$ be a curved Lie ideal of a curved DG-algebra $(A,d,R)$ and let $\Tr\colon A\to C$ be a trace map. 
Then for every $k\ge 0$ there exists  an $L_{\infty}$ morphism 
\[ \eta^k\colon \frac{A}{I}\rightsquigarrow \dfrac{C}{\Tr(I^{(k+1)}A)}[2k]\]
with linear component 
\[ \eta^k_1\colon \frac{A}{I}\to \dfrac{C}{\Tr(I^{(k+1)}A)}[2k],\qquad \eta^k_1(x)=\frac{1}{k!}\Tr(R^{{k}}x).\]
\end{corollary}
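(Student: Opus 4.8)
The plan is to deduce Corollary~\ref{cor.main1} as a formal consequence of Theorem~\ref{thm.main1} by composing the $L_\infty$ morphism $\sigma^k$ with a suitable strict (i.e.\ linear) morphism of DG-Lie algebras induced by the trace map. The key observation is that a trace map $\Tr\colon A\to C$, being a morphism of complexes annihilating $[A,A]$, factors canonically through the cyclic space as a morphism of abelian DG-Lie algebras $\overline{\Tr}\colon A/[A,A]\to C$, as already noted in the excerpt. Since both the source and target carry the trivial bracket, any morphism of complexes between them is automatically a morphism of abelian DG-Lie algebras, hence in particular a (strict) $L_\infty$ morphism.

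First I would fix $k\ge 0$ and produce, from $\sigma^k$, a target of the shape $C/\Tr(I^{(k+1)}A)[2k]$. The target of $\sigma^k$ is $A/([A,A]+I^{(k+1)}A)[2k]$. I claim $\overline{\Tr}$ descends to a morphism of complexes
\[ \overline{\Tr}\colon \frac{A}{[A,A]+I^{(k+1)}A}\longrightarrow \frac{C}{\Tr(I^{(k+1)}A)}. \]
Indeed, $\overline{\Tr}$ sends the ideal $[A,A]$ to $0$ by definition of a trace map, and it sends $I^{(k+1)}A$ into $\Tr(I^{(k+1)}A)$ by construction; since $\Tr(I^{(k+1)}A)$ is a $\delta$-stable subcomplex of $C$ (using $\Tr\circ d=\delta\circ\Tr$ together with $d(I^{(k+1)}A)\subset I^{(k+1)}A$, which follows from $d(I)\subset I$ and the Leibniz rule), the quotient is a well-defined complex and the induced map commutes with the differentials. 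Applying the shift $[2k]$ to both sides, $\overline{\Tr}[2k]$ is a strict $L_\infty$ morphism of abelian DG-Lie algebras.

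The composite $\eta^k:=\overline{\Tr}[2k]\circ\sigma^k$ is then an $L_\infty$ morphism
\[ \eta^k\colon \frac{A}{I}\rightsquigarrow \frac{C}{\Tr(I^{(k+1)}A)}[2k], \]
because $L_\infty$ morphisms compose and composition with a strict (linear) morphism simply postcomposes each Taylor coefficient. Its linear component is $\overline{\Tr}[2k]\circ\sigma^k_1$, and unwinding Definition~\ref{def:semiregmap} gives $\eta^k_1(x)=\overline{\Tr}\bigl(\tfrac{1}{k!}\tr(R^k x)\bigr)=\tfrac{1}{k!}\Tr(R^k x)$, since the projection $\tr$ followed by $\overline{\Tr}$ is just $\Tr$ read in the quotient $C/\Tr(I^{(k+1)}A)$. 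This matches the asserted formula for $\eta^k_1$.

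I do not expect a serious obstacle here, as this corollary is genuinely a formal consequence of the theorem; the only point requiring a line of verification is the descent of $\overline{\Tr}$ to the quotient, i.e.\ checking that $\Tr(I^{(k+1)}A)$ is indeed a subcomplex so that the quotient complex exists and $\overline{\Tr}$ is a chain map. The mild subtlety worth stating explicitly is that everything is compatible with taking the trivial bracket: the target $C/\Tr(I^{(k+1)}A)$ is abelian, so no bracket-compatibility conditions beyond those of a morphism of complexes arise, and the composition of $L_\infty$ morphisms reduces to ordinary postcomposition on Taylor coefficients. The rest is bookkeeping with the shift $[2k]$ and the scalar $\tfrac{1}{k!}$.
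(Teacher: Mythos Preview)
Your proposal is correct and takes essentially the same approach as the paper, which simply notes that the trace map factors to a morphism of abelian DG-Lie algebras $A/[A,A]\to C$ and declares the corollary an immediate consequence of Theorem~\ref{thm.main1}. You have merely filled in the routine verifications (descent to the quotient, $\delta$-stability of $\Tr(I^{(k+1)}A)$, and the form of the linear component) that the paper leaves implicit.
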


By general facts, see e.g. \cite{ManRendiconti,Man} and references therein, we have a clear application of the above results to deformation theory. 

In the situation of Corollary~\ref{cor.main1}, denote for simplicity by $C_k$ the quotient complex $C_k:=C/\Tr(I^{(k+1)}A)$, and  suppose  that a given deformation problem is controlled by the DG-Lie algebra $A/I$. Then the 
$L_{\infty}$ morphism $\eta^k$ induces a morphism of deformation functors
\[\eta^k\colon \Def_{A/I}\to \Def_{C_k[2k]}\]
that at the level of tangent and obstruction spaces gives the maps
\[ H^i(A/I)\to H^{2k+i}(C_k),\qquad x\mapsto \frac{1}{k!}\Tr(\At(A,I)^k x),\qquad i=1,2.\]
Since $C_k[2k]$ is abelian, the deformation functor $\Def_{C_k[2k]}$ is unobstructed and therefore the above map 
$H^2(A/I)\to H^{2k+2}(C_k)$ annihilates the obstructions.

\bigskip
\section{Curved DG-algebras and Chern--Simons classes}
\label{sec.chernsimons}

The general theory of Chern--Simons classes for differential graded associative algebras \cite{Quillen88} extends naturally to the curved case.

Let $(A,d,R)$ be a curved DG-algebra.  Then for every $x\in A^1$ we have  the twisted curved DG-algebra $A_x:=(A,d_x,R_x)$, where $d_x(a)=d(a)+[x,a]$ and $R_x=R+d(x)+x^2=R+d(x)+\dfrac{1}{2}[x,x]$.

Let $t$ be a central indeterminate of degree $0$, and consider the family of polynomials 
\[ P(t)^k_x=\sum_{i=1}^{k}R_{tx}^{i-1}xR_{tx}^{k-i}=\sum_{i=1}^{k}(R + t d(x) + t^2 x^2)^{i-1}x(R + t d(x) + t^2 x^2)^{k-i}
\in A[t],\]
with $k\ge 0$ an integer and $x\in A^1$.

\begin{lemma}\label{lem.traccia-diff-Wk} 
	In the above notation, for every $k\ge 0$ and every $x\in A^1$ we have
	\[ R_x^k-R^k=d\left(\int_0^1 P(t)^k_xdt\right)+\left[x,\int_0^1 tP(t)^k_xdt\right].\]
\end{lemma}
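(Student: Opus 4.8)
The plan is to write the left-hand side as the integral over $t\in[0,1]$ of a formal derivative in the scalar parameter $t$, and then to recognize the integrand as the image of $P(t)^k_x$ under the twisted differential $d_{tx}=d+t[x,-]$. Since $R_{tx}|_{t=0}=R$ and $R_{tx}|_{t=1}=R_x$, the fundamental theorem of calculus (applied formally to polynomials in $t$) gives
\[ R_x^k-R^k=\int_0^1\frac{d}{dt}R_{tx}^k\,dt. \]
By the Leibniz rule and the identity $\frac{d}{dt}R_{tx}=d(x)+2tx^2$ coming from $R_{tx}=R+td(x)+t^2x^2$, the integrand is $\frac{d}{dt}R_{tx}^k=\sum_{i=1}^kR_{tx}^{i-1}(d(x)+2tx^2)R_{tx}^{k-i}$, where $\frac{d}{dt}$ always denotes differentiation in the scalar $t$, not the differential $d$.

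The key device is the degree-one derivation $D:=d_{tx}=d+t[x,-]$ on $A[t]$, for which I would record two facts. First, since $A_{tx}$ is again a curved DG-algebra (the Remark in Section~\ref{sec.curvedpairs}, applied to the degree-one element $tx$), its curvature is $d_{tx}$-closed, i.e.\ $D(R_{tx})=d(R_{tx})+t[x,R_{tx}]=0$; this Bianchi-type identity can alternatively be checked by hand from $d(R)=0$, $d^2(x)=[R,x]$ and $[x,x^2]=0$. Second, a one-line computation gives $D(x)=d(x)+t[x,x]=d(x)+2tx^2$, which is exactly $\frac{d}{dt}R_{tx}$.

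With these in place the Leibniz expansion telescopes. Because $R_{tx}$ has even degree and $D(R_{tx})=0$, the derivation $D$ annihilates every power $R_{tx}^j$, so in $D(P(t)^k_x)$ only the terms where $D$ acts on the central factor $x$ survive:
\[ D(P(t)^k_x)=\sum_{i=1}^kR_{tx}^{i-1}D(x)R_{tx}^{k-i}=\sum_{i=1}^kR_{tx}^{i-1}\Bigl(\frac{d}{dt}R_{tx}\Bigr)R_{tx}^{k-i}=\frac{d}{dt}R_{tx}^k. \]
Hence the integrand equals $D(P(t)^k_x)=d(P(t)^k_x)+t[x,P(t)^k_x]$, and integrating over $[0,1]$ while pulling the $t$-independent operators $d$ and $[x,-]$ outside the integral produces the desired formula. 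The only genuinely substantive points are the two identities for $D$; once they are established the sign bookkeeping is trivial, precisely because $R_{tx}$ lives in even degree, so I expect the verification of $D(R_{tx})=0$ to be the main (though mild) obstacle.
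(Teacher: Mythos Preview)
Your proof is correct and follows essentially the same route as the paper's own argument: both reduce to the pointwise identity $d_{tx}(P(t)^k_x)=\partial_t(R_{tx}^k)$ via the fundamental theorem of calculus, and both establish it from the two facts $d_{tx}(R_{tx})=0$ and $d_{tx}(x)=\partial_t R_{tx}$ together with the Leibniz rule. The only cosmetic difference is that the paper states the integrand identity first and then verifies the two auxiliary formulas, whereas you compute $\partial_t R_{tx}^k$ explicitly before recognizing it as $D(P(t)^k_x)$.
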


\begin{proof} In the graded algebra $A[t]$ consider the derivations $\de_t=\dfrac{d~}{dt}$ and 
	$d_{tx}=d+[tx,-]$.
	Since $R_x^k-R^k=\int_0^1 \de_t(R_{tx}^k) dt$ it is sufficient to prove that 
	\begin{equation}\label{eqlem21} \de_t(R_{tx}^k)=d(P(t)^k_x)+[x,tP(t)^k_x]=d_{tx}(P(t)^k_x).\end{equation}
	Since 	
	$d^2(x)=[R,x]$, $d(x^2)=\frac{1}{2}d[x,x]=[d(x),x]$, $[x^2,x]=0$, we have 
	\[ d_{tx}(R_{tx})= d(R + td(x)+t^2x^2 )+[tx,R + td(x)+t^2x^2 ] = td^2(x) +t^2d(x^2) -t[R+td(x),x]=0 \]
	and 
	\[ \de_t(R_{tx}) =\de_t(R+td(x)+t^2x^2) =d(x)+2tx^2=d_{tx}(x).\]
	By the Leibniz formula, for every $k\ge 0$  we have
	\[
	d_{tx}(P(t)^k_x)=\sum_{i=1}^kd_{tx}\Big( R_{tx}^{i-1}xR_{tx}^{k-i}\Big)=\sum_{i=1}^kR_{tx}^{i-1}d_{tx}(x)R_{tx}^{k-i}=
	\sum_{i=1}^kR_{tx}^{i-1}\de_t(R_{tx})R_{tx}^{k-i}=\de_t(R_{tx}^{k}).\]
\end{proof}

Denote by  $\tr \colon A \to A/{[A,A]}$ the projection; this is the universal trace of $A$ in the sense that every trace map $A\to C$ is induced from $\tr$ by a unique morphism of DG-vector spaces $A/[A,A]\to C$.
For notational simplicity  we denote by $a \treq b$ the fact that $\tr(a)= \tr(b)$.

Following the theory of Chern classes, we can define the (universal) Chern character 
\[ \ch(A)=\sum_{k\ge 0}\ch(A)_k,\qquad \ch(A)_k\in H^{2k}\left(\frac{A}{[A,A]}\right),\]
where $\ch(A)_k$ is the cohomology class of $\dfrac{1}{k!}\tr(R^k)$.

Similarly, following Chern--Simons' theory \cite{CS,GS,Quillen88},  it also makes sense to define  the  
(universal) Chern--Simons class 
\[ \begin{split}
&\cs=\sum_{k>0}\cs_{2k-1},\qquad 
\cs_{2k-1}\colon A^1\to (A/[A,A])^{2k-1},\\[4pt]
&\cs_{2k-1}(x)=\frac{1}{(k-1)!}\tr \int_0^1 R_{tx}^{k-1}x\,dt \in \left(\frac{A}{[A,A]}\right)^{2k-1},\qquad k\ge 1,\; x\in A^1\,,\end{split}\]
where as before $R_{tx}:=R+td(x)+t^2x^2$, $t\in \K$, $x\in A^1$, denotes the curvature of the twisted curved DG-algebra $A_{tx}:=(A,d_{tx},R_{tx})$.

\begin{lemma}\label{lemma.traccia-diff-Wk}
	The Chern character is invariant under twisting: more precisely for every  $x\in A^1$ and  every $k\ge 1$ we have 
	\[  d(\cs_{2k-1}(x))=\frac{1}{k!}\tr \big( R_x^k - R^k \big).\]
\end{lemma}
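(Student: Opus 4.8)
The plan is to apply the trace map $\tr$ to the identity established in Lemma~\ref{lem.traccia-diff-Wk} and exploit the fact that commutators die in the cyclic space $A/[A,A]$. Concretely, Lemma~\ref{lem.traccia-diff-Wk} gives
\[ R_x^k-R^k=d\left(\int_0^1 P(t)^k_x\,dt\right)+\left[x,\int_0^1 tP(t)^k_x\,dt\right]\,, \]
so the immediate move is to push this through $\tr$. Since $\tr([A,A])=0$ by definition of the cyclic space, the bracket term on the right vanishes after applying $\tr$, leaving $\tr(R_x^k-R^k)\treq \tr\, d\left(\int_0^1 P(t)^k_x\,dt\right)$. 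Because $d$ descends to the differential on $A/[A,A]$ (as $[A,A]$ is a homogeneous Lie ideal preserved by $d$), I can commute $\tr$ past $d$ to get $\tr(R_x^k-R^k)\treq d\left(\tr\int_0^1 P(t)^k_x\,dt\right)$.

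The remaining task is purely computational: I must identify $\dfrac{1}{k!}\tr\int_0^1 P(t)^k_x\,dt$ with $\ch(x)^1_k=\dfrac{1}{(k-1)!}\tr\int_0^1 R_{tx}^{k-1}x\,dt$. Recalling $P(t)^k_x=\sum_{i=1}^{k}R_{tx}^{i-1}xR_{tx}^{k-i}$, the key observation is the cyclic invariance of the trace: in the cyclic space one has $\tr(R_{tx}^{i-1}xR_{tx}^{k-i})\treq \tr(xR_{tx}^{k-1})\treq \tr(R_{tx}^{k-1}x)$, since cyclically permuting factors of even degree (the $R_{tx}$ are degree two, hence carry no Koszul sign when moved past the degree-one element $x$ an appropriate number of times) leaves the class unchanged. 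Here I must be slightly careful to check that the Koszul signs all cancel, using that $R_{tx}$ has even degree so that moving it around $x$ produces no sign. Granting this, each of the $k$ summands of $P(t)^k_x$ has the same trace $\tr(R_{tx}^{k-1}x)$, whence $\tr\,P(t)^k_x\treq k\,\tr(R_{tx}^{k-1}x)$.

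Integrating in $t$ and inserting the normalizing constant, I obtain
\[ \frac{1}{k!}\tr\int_0^1 P(t)^k_x\,dt = \frac{k}{k!}\tr\int_0^1 R_{tx}^{k-1}x\,dt = \frac{1}{(k-1)!}\tr\int_0^1 R_{tx}^{k-1}x\,dt = \ch(x)^1_k\,. \]
Combining this with the displayed identity from the previous paragraph yields $d(\ch(x)^1_k)=\frac{1}{k!}\tr(R_x^k-R^k)$, which is exactly the assertion. The only genuine subtlety, and the step I expect to require the most care, is the sign bookkeeping in the cyclic invariance of the trace for the mixed product $R_{tx}^{i-1}xR_{tx}^{k-i}$; everything else is formal manipulation. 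I would verify that the graded commutator relation $\tr(ab)=(-1)^{|a||b|}\tr(ba)$, combined with the evenness of $R_{tx}$, forces all summands to coincide in $A/[A,A]$.
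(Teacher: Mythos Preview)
Your proof is correct and follows essentially the same approach as the paper: apply $\tr$ to the identity of Lemma~\ref{lem.traccia-diff-Wk}, kill the commutator term, and use cyclic invariance of the trace (with the observation that $R_{tx}$ has even degree) to show $\tr(P(t)^k_x)=k\,\tr(R_{tx}^{k-1}x)$, hence $\ch(x)^1_k=\frac{1}{k!}\tr\int_0^1 P(t)^k_x\,dt$. The paper's proof records exactly these two steps, only more tersely.
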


\begin{proof} Immediate consequence of Lemma~\ref{lem.traccia-diff-Wk} since 
	\[R_{tx}^{k-1}x\treq \frac{1}{k}\sum_{i=1}^{k}R_{tx}^{i-1}xR_{tx}^{k-i}\]
	and therefore
	\[ \cs_{2k-1}(x)=\frac{1}{k!}\tr  \int_0^1 P(t)^k_xdt.\]
\end{proof}

For the explicit computations in the following Section \ref{sec.explicit}, it will be useful to introduce the elements 
\begin{equation}\label{equ.Wk}
W(x)^{k+1} = \frac{1}{k!} \int_{0}^{1} R_{tx}^{k}x\, dt = \frac{1}{k!} \int_{0}^{1} (R + t d(x) + t^2 x^2)^{k}x\, dt\ \ \in A^{2k+1},\quad x\in A^1,\;k\ge 0,
\end{equation}
as a representative set of liftings to $A$ of  Chern--Simons classes.

\bigskip
\section{Convolution algebras and $L_{\infty}$ liftings of $\sigma_1^k$}
\label{sec.lifting}

Our next step is  to prove that  curved DG-algebras are preserved by taking convolution with the bar construction of a DG-Lie algebra.

For a graded vector space $V$ we shall denote by $V[1]$ the same vector space with the degrees shifted by $-1$. More precisely, if $v\in V$ is homogenous of degree $|v|$, then the degree of $v$ in $V[1]$ is $|v|-1$. Unless otherwise specified, for any $v\in V[1]$ we shall denote by $|v|$ the degree of $v$ as an element of $V$.

In this paper we adopt the following sign convention for the  d\'ecalage isomorphisms: given a pair of graded vector spaces $V,W$,  for every $i>0$, $k\in \Z$ we consider the isomorphisms:  
\begin{equation}\label{equ.decalage} 
\begin{split}&\mbox{d\'ec}\colon  \Hom^{k}_{\K}(V^{\wedge i},W)\to 
\Hom^{k+i-1}_{\K}(V[1]^{\odot i},W[1]),\\
&\mbox{d\'ec}(f)(v_1,\ldots,v_i)=(-1)^{k+i-1+\sum_{s=1}^i(i-s)(|v_s|-1)}f(v_1,\ldots,v_i).\end{split}
\end{equation}
In particular, for $i=1$ and $k=0$ the d\'ecalage isomorphism is the identity.

Let $(L,\debar,[-,-])$ be a DG-Lie algebra. Then 
there exists a counital DG-coalgebra structure on the symmetric coalgebra $S(L[1])$, where the differential $Q\colon S(L[1])\to S(L[1])$ is given in Taylor coefficients $q_i\colon L[1]^{\odot i}\to L[1]$ by 
\[q_1(x)=-\debar(x),\qquad q_2(x,y)=(-1)^{|x|}[x,y],\qquad q_i=0 \,\,\,\mbox{for $i\not=1,2$},\]
where $|x|$ denotes  the degree of $x$ in $L$.
In other words $q_1$ and $q_2$ are the images of $\debar$ and $[-,-]$ under the d\'ecalage isomorphisms \eqref{equ.decalage}.  

More precisely, see e.g. \cite{LadaMarkl,LMDT}, $Q$ decomposes as $Q=Q_0+Q_1$, where $Q_0,Q_1:S(L[1])\to S(L[1])$ are the coderivations defined by $Q_0(1)=Q_1(1)=0$ and
\[ Q_0(x_1\odot\cdots\odot x_n)=\sum_{i=1}^n (-1)^{i+|x_1|+\cdots+|x_{i-1}|}x_1\odot\cdots \odot\debar(x_i)\odot\cdots\odot x_n,\]
\[ Q_1(x_1\odot\cdots\odot x_n)=\sum_{\tau\in S(2,n-2)} \varepsilon(\tau)(-1)^{|x_{\tau(1)}|} [x_{\tau(1)},x_{\tau(2)}]\odot x_{\tau(3)}\odot\cdots\odot x_{\tau(n)},\] for every $x_1,\ldots,x_n\in L[1]$, $n\ge1$, 
where  we denote by $S(i,n-i)$ the set of $(i,n-i)$-unshuffles, i.e., permutations $\tau\in S_{n}$ such that $\tau(1)<\cdots<\tau(i)$ and $\tau(i+1)<\cdots<\tau(n)$, and by $\varepsilon(\tau)$ the symmetric Koszul sign defined by the identity $x_{\tau(1)}\odot \cdots\odot x_{\tau(n)}=\varepsilon(\tau)\,x_1\odot\cdots\odot x_{n}$ in the symmetric power $L[1]^{\odot n}$.

In particular, for every $x,y\in L[1]$ we have 
\[ Q(x)=-\debar(x),\qquad Q(x\odot y)=-\debar(x)\odot y-(-1)^{|x|-1}x\odot \debar(y)+(-1)^{|x|}[x,y].\] 
Notice also that $Q_1(L[1])=0$, $Q_0(L[1]^{\odot i})\subset L[1]^{\odot i}$ and $Q_1(L[1]^{\odot i})\subset L[1]^{\odot i-1}$ for every $i$. 

Given a curved DG-algebra $(A,d,R)$, we introduce the following notations:
\[\mathbf{C}(L,A)_i:=\Hom^*_{\K}\Big(L[1]^{\odot i}, A\Big),\qquad\mathbf{C}(L,A)=\bigoplus_{i\ge0}\mathbf{C}(L,A)_i\subset \Hom^*_{\K}\Big(S(L[1]),A\Big).\]
	The unshuffle coproduct $\Delta\colon S(L[1])\to S(L[1])^{\otimes 2}$ and the algebra product 
	$m\colon A^{\otimes 2}\to A$ induce an associative product $f\star g:=m(f\otimes g)\Delta$ on the space $\mathbf{C}(L,A)$, called the \emph{convolution product}.  More explicitly, if $f\in\mathbf{C}(L,A)_i$ and $g\in\mathbf{C}(L,A)_j$, then $f\star g\in\mathbf{C}(L,A)_{i+j}$ is defined by
\begin{equation}	
\begin{split} &(f\star g)(x_1,\ldots,x_{i+j})\\
&\qquad=\sum_{\tau\in S(i,j)}\varepsilon(\tau) (-1)^{|g|(|x_{\tau(1)}|+\cdots+|x_{\tau(i)}|-i)}
f\big(x_{\tau(1)},\ldots,x_{\tau(i)}\big)g\big(x_{\tau(i+1)},\ldots,x_{\tau(i+j)}\big)\\[3pt]
&\qquad =\sum_{\tau\in S_{i+j}}\frac{\varepsilon(\tau)}{i!j!}(-1)^{|g|(|x_{\tau(1)}|+\cdots+|x_{\tau(i)}|-i)}
\,f\big(x_{\tau(1)},\ldots,x_{\tau(i)}\big)g\big(x_{\tau(i+1)},\ldots,x_{\tau(i+j)}\big).\end{split}
	\end{equation}

In particular,  for $a,b\in A=\Hom^*_{\K}\Big(L[1]^{\odot 0}, A\Big)$ and $f\in 
\Hom^{*}_{\K}\Big(L[1]^{\odot 1}, A\Big)$ we have 
\[a\star b=ab,\qquad  a\star f=af,\qquad 
[a,f]_{\star}(x)=[a,f(x)],\]
where $[-,-]_{\star}$ is the graded commutator of $\star$.

On the algebra $\mathbf{C}(L,A)$ we can define the degree one derivations 
\[\delta_0,\delta_1,\delta\in \Hom_{\K}^1(\mathbf{C}(L,A),\mathbf{C}(L,A))\] 
induced by the derivation $d$ on $A$ and by the coderivations $Q_0,Q_1,Q$ on $S(L[1])$ respectively. Namely, given $f\in\mathbf{C}(L,A)$, we put
	\[ \delta_0(f) := df-(-1)^{|f|}fQ_0,\qquad\delta_1(f) := (-1)^{|f|+1}fQ_1,\qquad\delta(f) := \delta_0(f)+\delta_1(f) = df-(-1)^{|f|}fQ.\]
Notice that $\delta_0(\mathbf{C}(L,A)_i)\subset \mathbf{C}(L,A)_i$ and 
$\delta_1(\mathbf{C}(L,A)_i)\subset \mathbf{C}(L,A)_{i+1}$ for every $i$.

Defining a weight gradation in $\mathbf{C}(L,A)$ by setting the elements in $\mathbf{C}(L,A)_i$ of weight $i$ we have that $\delta=\delta_0+\delta_1$ is precisely the weight decomposition of the derivation $\delta$. 	
	
More explicitly, given $f\in\mathbf{C}(L,A)_i$, then  $\delta_0(f)\in\mathbf{C}(L,A)_i$ and $\delta_1(f)\in\mathbf{C}(L,A)_{i+1}$ are defined by: 
\[ \begin{split}
\delta_0(f)(x_1,\ldots, x_i)&= df(x_1,\ldots,x_i)\\
&\quad+(-1)^{|f|}f(\debar x_1,\ldots,x_i)+\cdots+(-1)^{|f|+|x_1|+\cdots+|x_{i-1}|+i-1}f(x_1,\ldots,\debar x_i),
\\[5pt]
\delta_1(f)(x_1,\ldots, x_{i+1})&=(-1)^{|f|+1}\sum_{\tau\in S(2,i-1)} \varepsilon(\tau)(-1)^{|x_{\tau(1)}|} f([x_{\tau(1)},x_{\tau(2)}],x_{\tau(3)},\ldots,x_{\tau(i+1)}).\end{split}\]
	
	Finally, we continue to denote by $R\in\mathbf{C}(L,A)_0$ the degree two element corresponding to the curvature $R\in A$ under the isomorphism 
	\[\mathbf{C}(L,A)_0:=\Hom^*_{\K}\Big(L[1]^{\odot 0},A\Big)=\Hom^*_{\K}(\K,A)= A.\] 
	(in other words, $R(1)=R$ and $R(x_1,\ldots,x_i)=0$ whenever $i>0$).

\begin{proposition} In the above situation, the data $(\mathbf{C}(L,A),\delta,\star,R)$ is a 
curved DG-algebra.
\end{proposition}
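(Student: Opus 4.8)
The plan is to verify the three defining features of a curved DG-algebra in turn: associativity of $\star$, that $\delta$ is a degree-one derivation of $\star$, and the curvature identities $\delta(R)=0$ and $\delta^2(f)=[R,f]_\star$. Associativity is formal: since the unshuffle coproduct $\Delta$ is coassociative and $m$ is associative, both $f\star(g\star h)$ and $(f\star g)\star h$ equal $m(m\otimes 1)(f\otimes g\otimes h)(\Delta\otimes 1)\Delta$. The element $R\in\mathbf{C}(L,A)_0$ has degree $2$ by construction, so everything reduces to understanding $\delta$.

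For the derivation property I would invoke the general principle that, whenever $d$ is a derivation of an algebra and $Q$ a coderivation of a coalgebra, $f\mapsto df-(-1)^{|f|}fQ$ is a derivation of the convolution product. Concretely, from $d\,m=m(d\otimes 1+1\otimes d)$ one gets $d(f\star g)=(df)\star g+(-1)^{|f|}f\star(dg)$, and from $\Delta Q=(Q\otimes 1+1\otimes Q)\Delta$ together with the Koszul sign rule one gets $(f\star g)Q=(-1)^{|g|}(fQ)\star g+f\star(gQ)$; combining these yields $\delta(f\star g)=\delta(f)\star g+(-1)^{|f|}f\star\delta(g)$. Applied with the codifferential $Q$ on $S(L[1])$ this gives the claim.

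The identity $\delta(R)=0$ splits by weight. Since $R$ is concentrated in weight $0$, i.e. $R(1)=R$ and $R$ vanishes on $L[1]^{\odot i}$ for $i>0$, the weight-raising part $\delta_1(R)$ only sees $R\circ Q_1$ evaluated on $L[1]^{\odot 1}$, which vanishes because $Q_1(L[1])=0$; the weight-preserving part $\delta_0(R)$ stays in weight $0$ with value $d(R(1))=d(R)=0$ by the curvature axiom of $A$. The heart of the proof is $\delta^2(f)=[R,f]_\star$, which I would obtain by expanding $\delta^2=\delta_0^2+(\delta_0\delta_1+\delta_1\delta_0)+\delta_1^2$ according to weight. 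Short computations give $\delta_0^2(f)=d^2\circ f-fQ_0^2$, $\delta_0\delta_1(f)+\delta_1\delta_0(f)=-f(Q_0Q_1+Q_1Q_0)$ and $\delta_1^2(f)=-fQ_1^2$. Because $S(L[1])$ is a genuine DG-coalgebra we have $Q^2=0$, and decomposing this by weight yields $Q_0^2=0$, $Q_0Q_1+Q_1Q_0=0$ and $Q_1^2=0$; hence the last two blocks vanish and $\delta^2(f)=d^2\circ f$. Finally $d^2(a)=[R,a]$ in $A$ gives $(d^2\circ f)(x_1,\dots,x_i)=[R,f(x_1,\dots,x_i)]$, and using the degree bookkeeping $|f(x_1,\dots,x_i)|=|f|+\sum_s(|x_s|-1)$ one checks this equals $[R,f]_\star(x_1,\dots,x_i)$, the sign of the convolution bracket of a weight-$0$ element matching exactly.

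The main obstacle is bookkeeping rather than conceptual: all the structural inputs ($Q^2=0$ with its weight decomposition, $d$ a derivation, $Q$ a coderivation, $d^2=[R,-]$) are already available, so the only delicate point is tracking the Koszul signs so that the internal commutator $[R,f(\xi)]$ coincides with the convolution commutator $[R,f]_\star(\xi)$, and likewise that the signs close up in the derivation identity.
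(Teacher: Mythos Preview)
Your proposal is correct and follows essentially the same approach as the paper: both verify the derivation property of $\delta$ from the fact that $d$ is a derivation and $Q$ a coderivation, and both obtain the curvature identities by decomposing $\delta=\delta_0+\delta_1$ according to weight and using $Q_0^2=Q_1^2=Q_0Q_1+Q_1Q_0=0$ together with $d^2=[R,-]$. Your write-up is somewhat more detailed (you spell out associativity and the sign matching for $[R,f]_\star$, which the paper leaves implicit), but the argument is the same.
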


\begin{proof}	
	Using the fact that $d$ is an algebra derivation and $Q_0,Q_1,Q$ are coalgebra coderivations, it is easy to check that $\delta_0,\delta_1,\delta$ are algebra derivations with respect to the convolution product $\star$. 
For instance, given $f\in \mathbf{C}(L,A)_i$ and $g\in \mathbf{C}(L,A)_j$  we have 
\[\begin{split} \delta(f\star g)&=d m(f\otimes g)\Delta-(-1)^{|f|+|g|}m(f\otimes g)\Delta Q\\
&=m(d\otimes \Id+\Id\otimes d)(f\otimes g)\Delta-(-1)^{|f|+|g|}m(f\otimes g)(Q\otimes\Id+\Id\otimes Q)\Delta\\
&=m(df\otimes g+(-1)^{|f|}f\otimes dg)\Delta-(-1)^{|f|+|g|}m((-1)^{|g|}fQ\otimes g+f\otimes gQ)\Delta\\
&=m(\delta(f)\otimes g+(-1)^{|f|}f\otimes \delta(g))\Delta=\delta(f)\star g+(-1)^{|f|}f\star\delta(g).
\end{split}	\]

	Moreover, using the fact that $d^2=[R,-]$ and $Q_0^2=Q_1^2=Q^2=0$, one readily checks that 
	\[ \delta(R)=\delta_0(R)=0,\qquad \delta_1^2=\delta_0\delta_1+\delta_1\delta_0=0,\qquad \delta^2=(\delta_0)^2=[R,-]_{\star}.\]
%
\end{proof}	
	
\begin{definition}\label{def.convolution-algebra}	In the above notation, we shall call $(\mathbf{C}(L,A),\delta,\star,R)$ the \emph{convolution (curved DG) algebra} associated with the curved DG-algebra $A$ and the DG-Lie algebra $L$.
\end{definition}

\begin{definition}\label{def.morfismo-curved}
	A morphism of curved DG-algebras is a morphism of graded algebras that commutes with the derivations and respects the curvatures:
	\[ f\colon (A_1,d_1,R_1)\to (A_2,d_2,R_2),\qquad fd_1=d_2f,\quad f(R_1)=R_2.\]
\end{definition}
\begin{remark} If $f\colon A_1\to A_2$ is a morphism of curved DG-algebras then the induced map 
	$\mathbf{C}(L,A_1)\to \mathbf{C}(L,A_2)$ is a morphism of curved DG-algebras. Similarly, if 
	$M\to L$ is a morphism of DG-Lie algebras (or, more in general, an $L_\infty$ morphism), then 
	the induced map 
	$\mathbf{C}(L,A)\to \mathbf{C}(M,A)$ is a morphism of curved DG-algebras.
\end{remark}

\begin{remark}\label{rem.MC} Given a degree one element $x\in L^1=L[1]^0$, there is an associated morphism of graded associative algebras 
	\begin{eqnarray}\nonumber \operatorname{ev}_x:\mathbf{C}(L,A)&\to& A,
	\\
	\nonumber f\in\mathbf{C}(L,A)_i&\mapsto& \operatorname{ev}_x(f):=\frac{1}{i!}f(x,\ldots,x).
	\end{eqnarray}
	In fact, if $f\in\mathbf{C}(L,A)_i$ and $g\in \mathbf{C}(L,A)_j$, then 
	\[\begin{split} \operatorname{ev}_x\big(f\star g\big)&=\frac{1}{(i+j)!} \big(f\star g\big)(x,\ldots,x) = \frac{1}{(i+j)!}\binom{i+j}{i}f(x,\ldots,x)g(x,\ldots,x)\\
	&=\frac{1}{i!}f(x,\ldots,x)\frac{1}{j!}g(x,\ldots,x)=\operatorname{ev}_x(f)\operatorname{ev}_x(g).
	\end{split}\]
	It is also clear that $\operatorname{ev}_x$ sends the curvature $R\in\mathbf{C}(L,A)_0$ to the curvature $R\in A^2$. In general $\operatorname{ev}_x$ is not  a morphism of curved DG-algebras, but it is so when $x\in\MC(L)$, i.e., when $x$ is a Maurer-Cartan element of $L$. In fact, if $\debar x+[x,x]/2=0$, then for every $f\in\mathbf{C}(L,A)_i$ we have:
	\[\begin{split}  \operatorname{ev}_x\big(\delta(f)\big)&=\frac{1}{i!}\delta_0(f)(x,\ldots,x)+\frac{1}{(i+1)!}\delta_1(f)(x,\ldots,x) \\
	&= \frac{1}{i!} df(x,\ldots,x)+\frac{(-1)^{|f|}}{(i-1)!}f(\debar x,x,\ldots,x)+\frac{(-1)^{|f|}}{2(i-1)!}f([x,x],x,\ldots,x)\\ 
	&= d\operatorname{ev}_x(f).
	\end{split}\]
\end{remark}

For every graded subspace $E\subset A$, we denote by 
$\mathbf{C}(L,E)=\bigoplus_i\Hom^*_{\K}(L[1]^{\odot i},E)\subset \mathbf{C}(L,A)$.

\begin{lemma}\label{lem.bracketinC} 
If $I$ is a curved Lie ideal of $A$, then 
$\mathbf{C}(L,I)$ is a  curved Lie ideal of $\mathbf{C}(L,A)$. Moreover
$\mathbf{C}(L,I)^{(k)}\mathbf{C}(L,A)\subset \mathbf{C}(L,I^{(k)}A)$ for every $k$, and 
$[\mathbf{C}(L,A),\mathbf{C}(L,A)]_{\star}\subset \mathbf{C}(L,[A,A])$.
\end{lemma}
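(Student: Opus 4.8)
The plan is to reduce all three assertions to a single pointwise description of the convolution bracket and then read off each claim. I would first establish the key computation: for $f\in\mathbf{C}(L,A)_{i}$ and $g\in\mathbf{C}(L,A)_{j}$, and for arbitrary $x_1,\dots,x_{i+j}\in L[1]$, the value of the graded commutator $[f,g]_{\star}=f\star g-(-1)^{|f||g|}g\star f$ is a signed sum, indexed by the partitions of $\{1,\dots,i+j\}$ into a block $A$ of size $i$ and a complementary block $B$ of size $j$, of graded commutators taken in $A$:
\[
[f,g]_{\star}(x_1,\dots,x_{i+j})=\sum_{\{A,B\}}\pm\,[\,f(x_A),\,g(x_B)\,],
\]
where $x_A$ (resp.\ $x_B$) denotes the tuple of those $x_s$ whose index $s$ lies in $A$ (resp.\ in $B$), taken in increasing order. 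The point is that the sum defining $f\star g$ runs over $(i,j)$-unshuffles and the sum defining $g\star f$ over $(j,i)$-unshuffles; the map sending an $(i,j)$-unshuffle $\tau$ to the unique $(j,i)$-unshuffle with first block $\{\tau(i+1),\dots,\tau(i+j)\}$ is a bijection, and it pairs the term $\pm f(x_A)g(x_B)$ of $f\star g$ with the term $\pm g(x_B)f(x_A)$ of $g\star f$ for the \emph{same} partition $\{A,B\}$.

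Granting this, I would deduce the last two assertions of the lemma at once. Since each $[f(x_A),g(x_B)]$ lies in $[A,A]$, the displayed formula shows $[f,g]_{\star}(x_1,\dots,x_{i+j})\in[A,A]$ for all arguments, whence $[\mathbf{C}(L,A),\mathbf{C}(L,A)]\subseteq\mathbf{C}(L,[A,A])$. If moreover $g\in\mathbf{C}(L,I)$, then $g(x_B)\in I$ for every $B$, so each commutator lies in $[A,I]\subseteq I$ because $I$ is a Lie ideal; hence $[f,g]_{\star}\in\mathbf{C}(L,I)$, which (together with its antisymmetric counterpart) shows that $\mathbf{C}(L,I)$ is a Lie ideal of $\mathbf{C}(L,A)$.

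The remaining structure of the first claim is immediate. The curvature $R\in\mathbf{C}(L,A)_0$ corresponds to $R\in A$, which lies in $I$ by hypothesis, so $R\in\mathbf{C}(L,I)$. For stability under $\delta=\delta_0+\delta_1$, I would simply inspect the explicit formulas: if $f\in\mathbf{C}(L,I)$, then every summand of $\delta_0(f)(x_1,\dots,x_i)$ is either $df(x_1,\dots,x_i)\in d(I)\subseteq I$ or of the form $\pm f(x_1,\dots,\debar x_j,\dots,x_i)\in I$, and every summand of $\delta_1(f)$ is of the form $\pm f([x_{\tau(1)},x_{\tau(2)}],\dots)\in I$, because $f$ takes values in $I$ regardless of its arguments; thus $\delta(\mathbf{C}(L,I))\subseteq\mathbf{C}(L,I)$.

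Finally, for the inclusion $\mathbf{C}(L,I)^{(k)}\mathbf{C}(L,A)\subseteq\mathbf{C}(L,I^{(k)}A)$ I would argue directly from the convolution product. A generator of $\mathbf{C}(L,I)^{(k)}\mathbf{C}(L,A)$ is an iterated product $f_1\star\cdots\star f_k\star g$ with $f_1,\dots,f_k\in\mathbf{C}(L,I)$ and $g\in\mathbf{C}(L,A)$; evaluating on any tuple of arguments yields a signed sum of products $f_1(x_{A_1})\cdots f_k(x_{A_k})\,g(x_B)$ over partitions of the arguments into blocks $A_1,\dots,A_k,B$. Each factor $f_\ell(x_{A_\ell})$ lies in $I$, so each such product lies in $I^{(k)}A$ by definition, and hence the value lies in $I^{(k)}A$, giving $f_1\star\cdots\star f_k\star g\in\mathbf{C}(L,I^{(k)}A)$. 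I expect the only genuinely delicate point to be the sign bookkeeping in the key computation: one must check that the Koszul sign $\varepsilon(\tau)$, the d\'ecalage signs hidden in the definitions of $q_1,q_2$, and the sign $(-1)^{|g|(\cdots)}$ in the convolution product combine, under the unshuffle bijection above, exactly into the graded-commutator sign $-(-1)^{|f(x_A)||g(x_B)|}$ relating the two paired terms. A direct check in the case $i=j=1$ already exhibits this cancellation, and the general case follows the same bookkeeping applied blockwise.
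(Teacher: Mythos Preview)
Your proposal is correct and takes essentially the same approach as the paper: the key observation in both is that $[f,g]_\star$, evaluated on any string of arguments, lands in the span of graded commutators $[f(\cdot),g(\cdot)]$ taken in $A$. The paper phrases this via the graded cocommutativity of the unshuffle coproduct $\Delta$ (so that the image of $\Delta$ consists of symmetrised tensors, on which the antisymmetrised map $m(f\otimes g-(-1)^{|f||g|}g\otimes f)$ produces commutators), while you spell out the same fact as the explicit bijection between $(i,j)$- and $(j,i)$-unshuffles; these are the same argument, and the remaining claims are, as the paper says, immediate from the definitions.
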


\begin{proof} Immediate from the definitions and from the fact that, 
since the 
unshuffle coproduct  $\Delta$ is graded cocommutative, given 
$f,g\in  \mathbf{C}(L,A)$, every element in the image of $[f,g]_{\star}$ is a linear combination of elements of type
\[\begin{split} 
&m(f\otimes g-(-1)^{|f||g|}g\otimes f)(x\otimes y+(-1)^{(|x|-1)(|y|-1)}y\otimes x)\\
&\qquad =(-1)^{|g|(|x|-1)}[f(x),g(y)]+(-1)^{(|x|+|g|-1)(|y|-1)}[f(y),g(x)].\end{split}\]
\end{proof}

Let $(A,d,R)$ be a curved DG-algebra with a curved Lie ideal $I \subset A$ and 
 denote by 
$\pi \colon A \to A/I$ 
the projection.

Given a DG-Lie algebra $L=(L,\debar,[-,-])$ together with a morphism of graded vector spaces $s\colon L\to A$, the latter can be seen as an element 
of degree $+1$ in $\mathbf{C}(L, A)_1=\Hom^*_{\K}(L[1],A)$, and then it 
gives a sequence of Chern--Simons forms  
\[ W(s)^{k+1} = \frac{1}{k!} \int_{0}^{1} (R + t \delta(s) + t^2 s\star s)^{k}\star s\, dt\ \ \in\mathbf{C}(L,A)^{2k+1}= \Hom^{2k+1}_{\K}(S(L[1]), A),\quad k\ge 0.\]
	
Notice that \begin{equation}\label{eq.lin} W(s)^{k+1}=\sum_{i=1}^{2k+1}W(s)^{k+1}_i,\quad\text{with } W(s)^{k+1}_i\in\Hom^{2k+1}_{\K}(L[1]^{\odot i}, A) \mbox{ and } W(s)^{k+1}_1=\frac{R^{k}s}{k!}\,. 
\end{equation}

\begin{lemma}\label{lem.delta-W(x)} In the above situation, let $s\colon L\to A$ be a morphism of graded vector spaces such that the composition  
$\pi s\colon L\to A/I$ is a morphism of DG-Lie algebras. 
Then $\delta (s) + s\star s\in \mathbf{C}(L,I)$ and
\[ \delta W(s)^{k+1} \in [\mathbf{C}(L, A), \mathbf{C}(L, A)] + \mathbf{C}(L, I)^{(k+1)} \quad \forall k \geq 0.\]
Moreover, $\delta (s) + s\star s\in \mathbf{C}(L,I)_1$ if and only if $s$ is a  morphism of 
graded Lie algebras.
\end{lemma}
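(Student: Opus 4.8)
The plan is to work entirely inside the convolution curved DG-algebra $(\mathbf{C}(L,A),\delta,\star,R)$ of Definition~\ref{def.convolution-algebra}, viewing $s$ as the degree one element of $\mathbf{C}(L,A)_1$ as in the setup. The single observation driving everything is that, under the stated hypothesis, the twisted curvature $R_s=R+\delta(s)+s\star s$ lies in the curved Lie ideal $\mathbf{C}(L,I)$ (which is a curved Lie ideal by Lemma~\ref{lem.bracketinC}). Since $R\in\mathbf{C}(L,I)$ because $R\in I$, the first assertion $\delta(s)+s\star s\in\mathbf{C}(L,I)$ is literally the assertion $R_s\in\mathbf{C}(L,I)$, and this membership is what will feed the main assertion.

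First I would establish the two statements about $\delta(s)+s\star s$ by splitting it into weight components. Because $\delta=\delta_0+\delta_1$ with $\delta_0$ preserving weight and $\delta_1$ raising it by one, while $s\star s\in\mathbf{C}(L,A)_2$, the weight one part of $\delta(s)+s\star s$ is $\delta_0(s)\in\mathbf{C}(L,A)_1$ and its weight two part is $\delta_1(s)+s\star s\in\mathbf{C}(L,A)_2$. A direct computation from the explicit formulas for $\delta_0$, $\delta_1$ and $\star$ gives, for $x,x_1,x_2\in L[1]$,
\[ \delta_0(s)(x)=d\,s(x)-s(\debar x),\qquad (\delta_1(s)+s\star s)(x_1,x_2)=(-1)^{|x_1|}\big(s([x_1,x_2])-[s(x_1),s(x_2)]\big). \]
The first expression lies in $I$ for all $x$ exactly when $\pi s$ commutes with the differentials, and the second lies in $I$ for all $x_1,x_2$ exactly when $\pi s$ respects brackets; as $\pi s$ is a morphism of DG-Lie algebras both hold, giving $\delta(s)+s\star s\in\mathbf{C}(L,I)$. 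For the last assertion I note that, the weight one part being already in $\mathbf{C}(L,I)$, membership in $\mathbf{C}(L,I)_1$ is equivalent to the vanishing of the weight two part $\delta_1(s)+s\star s$, which by the displayed formula holds if and only if $s([x_1,x_2])=[s(x_1),s(x_2)]$ for all $x_1,x_2$, i.e. $s$ is a morphism of graded Lie algebras.

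For the main assertion I would pass to the cyclic space. Writing $\tr$ also for the projection $\mathbf{C}(L,A)\to\mathbf{C}(L,A)/[\mathbf{C}(L,A),\mathbf{C}(L,A)]$, one has by construction $\tr(W(s)^{k+1})=\ch(s)^1_{k+1}$. Applying the twisting invariance of the Chern character (Lemma~\ref{lemma.traccia-diff-Wk}) inside the convolution algebra with $x=s$ and index $k+1$, and using that $\delta$ descends to the cyclic space and commutes with $\tr$, yields
\[ \tr\big(\delta W(s)^{k+1}\big)=\delta\big(\ch(s)^1_{k+1}\big)=\frac{1}{(k+1)!}\,\tr\big(R_s^{k+1}-R^{k+1}\big). \]
Hence $\delta W(s)^{k+1}$ and $\tfrac{1}{(k+1)!}(R_s^{k+1}-R^{k+1})$ differ by an element of $[\mathbf{C}(L,A),\mathbf{C}(L,A)]$. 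It then remains only to observe that $R_s^{k+1}-R^{k+1}\in\mathbf{C}(L,I)^{(k+1)}$, which is immediate: both $R_s$ and $R$ lie in $\mathbf{C}(L,I)$ by the first assertion, so each of the two $(k+1)$-fold products is a product of $k+1$ elements of $\mathbf{C}(L,I)$. Combining the two memberships gives $\delta W(s)^{k+1}\in[\mathbf{C}(L,A),\mathbf{C}(L,A)]+\mathbf{C}(L,I)^{(k+1)}$.

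I expect the only genuine obstacle to be organizational rather than conceptual: one must invoke the identity $\tr(W(s)^{k+1})=\ch(s)^1_{k+1}$ and Lemma~\ref{lemma.traccia-diff-Wk} for $\mathbf{C}(L,A)$ with the correct normalization constants, and carry out the Koszul and d\'ecalage sign bookkeeping for the weight two part of $\delta(s)+s\star s$ carefully. Once these are in place, the argument reduces to the clean structural facts that powers of $R$ and of $R_s$ automatically land in $\mathbf{C}(L,I)^{(k+1)}$ and that the trace detects $\delta W(s)^{k+1}$ modulo commutators.
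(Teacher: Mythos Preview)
Your proposal is correct and follows essentially the same route as the paper: both arguments split $\delta(s)+s\star s$ into its weight one and weight two parts, compute them explicitly to verify membership in $\mathbf{C}(L,I)$, and then invoke Lemma~\ref{lemma.traccia-diff-Wk} in the convolution algebra to see that $\delta W(s)^{k+1}$ equals $\tfrac{1}{(k+1)!}(R_s^{k+1}-R^{k+1})$ modulo commutators, which lands in $\mathbf{C}(L,I)^{(k+1)}$ because both $R$ and $R_s$ lie in $\mathbf{C}(L,I)$. The only cosmetic difference is the order of presentation (you establish the membership first and then deduce the main claim, while the paper states the trace identity first and reduces to the membership), and your explicit packaging of the weight two part as $(-1)^{|x_1|}\big(s([x_1,x_2])-[s(x_1),s(x_2)]\big)$ makes the ``if and only if'' for the final assertion slightly more transparent than the paper's phrasing.
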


\begin{proof}
	By Lemma~\ref{lemma.traccia-diff-Wk}
	\[ \delta (W(s)^{k+1}) \treq \frac{1}{(k+1)!} \big( (R + \delta(s) + s\star s)^{k+1} - R^{k+1} \big).\] 
	Since $R\in \mathbf{C}(L, I)$, 
	 it is sufficient to show that 
	$\delta (s) + s\star s$ belongs to $\mathbf{C}(L, I)$. 
	
	Since $\delta_0(s)\in \mathbf{C}(L,A)_1$ and 
	$\delta_1(s), s\star s\in \mathbf{C}(L,A)_2$, the condition 
	$\delta (s) + s\star s\in \mathbf{C}(L, I)$ is equivalent to: 
	\[ \pi \delta_0 s (b_1) =0 ,\quad \pi \delta_1 s (b_1, b_2) + \pi (s \star s)(b_1, b_2)=0,\quad \forall b_1,b_2\in L\,. \]
By definition, $\delta_0 s (b_1)= d s (b_1) - s (\debar b_1)$, so that 
\[\pi \delta_0 s (b_1)= \pi d s (b_1) - \pi s (\debar b_1) = d\pi s (b_1) - \pi s\debar b_1 =0\,.\]

On the other hand, 
\begin{align*} (s \star s)(b_1, b_2) &= m (s \otimes s) (b_1 \otimes b_2 + (-1)^{(|b_1|-1)(|b_2|-1 )} b_2 \otimes b_1) \\
	&= (-1)^{|b_1|-1} s (b_1)s(b_2) + (-1)^{(|b_1|-1)(|b_2|-1 ) + |b_2|-1} s(b_2)s(b_1) \\
	&= (-1)^{|b_1| -1} [s (b_1), s(b_2)].
	\end{align*}
Since $\pi s$ and $\pi$ are morphisms of graded Lie algebras we have
	\[ \pi\delta_1 s (b_1, b_2)= (-1)^{|b_1|} \pi s ([b_1, b_2])=
	(-1)^{|b_1|} [\pi s(b_1),\pi s (b_2)]=(-1)^{|b_1|} \pi [s(b_1),s(b_2)]\,.\] 
The same computation shows that $\delta_1 s+s \star s=0$ if and only if $s$ is a morphism of graded Lie algebras.

\end{proof}

\begin{remark} In fact, the above computations also prove the converse of the first part of Lemma~\ref{lem.delta-W(x)}: namely, that 
	for  a morphism of graded vector spaces $s\colon L\to A$ we have $\delta(s)+s\star s\in \mathbf{C}(L, I)$ if and only if the composition $\pi s$ is a morphism of DG-Lie algebras.
\end{remark}

By Lemma~\ref{lem.bracketinC} we have a natural morphism of differential graded vector spaces
\[ \theta_k\colon \frac{\mathbf{C}(L,A)}{[\mathbf{C}(L,A),\mathbf{C}(L,A)]+\mathbf{C}(L,I)^{(k+1)}\mathbf{C}(L,A)}\to \Hom_{\K}^{*}\left(S(L[1]),\frac{A}{[A,A]+I^{(k+1)}A}\right)\]
and then, for every $s\in \mathbf{C}(L,A)^1$ and every $k\ge 0$, 
we can consider
\[\theta_k(W(s)^{k+1})\in\Hom^{2k+1}_{\K}\left(S(L[1]),\frac{A}{[A,A]+I^{(k+1)}A}\right)\]
and view it as an element in
\[\theta_k(W(s)^{k+1})\in\Hom^0_{\K}\left(S(L[1]),\frac{A}{[A,A]+I^{(k+1)}A}[2k+1]\right).\]

\begin{theorem}\label{th.delta-W(x)}
 In the above situation, suppose that  $\pi s\colon L\to A/I$ is a morphism of DG-Lie algebras. Then $\theta_k(W(s)^{k+1})$ is the corestriction of an $L_{\infty}$ morphism  \[L\rightsquigarrow \dfrac{A}{[A,A]+I^{(k+1)}A}[2k]\] 
with linear Taylor coefficient $\sigma^k_1\pi s$, where $\sigma^k_1$ is the  morphism from Definition \ref{def:semiregmap}, and all Taylor coefficients $L[1]^{\odot i}\to \dfrac{A}{[A,A]+I^{(k+1)}A}[2k+1]$ of degree $i\ge 2k+2$ vanishing. 

\end{theorem}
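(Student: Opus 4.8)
The plan is to reduce the statement to a single closedness condition in the convolution complex and then feed in Lemma~\ref{lem.delta-W(x)}. Write $B:=\dfrac{A}{[A,A]+I^{(k+1)}A}[2k]$. Because we have quotiented out $[A,A]$, the induced bracket on $B$ is trivial and the relation $d^2=[R,-]$ descends to $0$; thus $B$ is an honest (abelian) DG-Lie algebra, and the bar codifferential on $S(B[1])$ reduces to the coderivation $Q_0^B$ coming from the differential alone, with no quadratic part. Note also that $\theta_k(W(s)^{k+1})$ is exactly a degree-zero element of $\Hom^0_{\K}(S(L[1]),B[1])$, i.e.\ a candidate corestriction.

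First I would isolate the general principle that, \emph{for an abelian DG-Lie target}, an $L_\infty$ morphism is the same datum as a cocycle of the convolution complex. Concretely, a degree-zero $F^{\mathrm{cor}}\in\Hom^0_{\K}(S(L[1]),B[1])$ is the corestriction of an $L_\infty$ morphism $L\rightsquigarrow B$ if and only if $\delta(F^{\mathrm{cor}})=0$ in $\mathbf{C}(L,B)$. The reason is that, writing $F$ for the coalgebra morphism cofreely generated by $F^{\mathrm{cor}}$ and $Q_L,Q_B$ for the two codifferentials, both $FQ_L$ and $Q_B F$ are $F$-coderivations, hence equal iff their corestrictions agree; and since $Q_B=Q_0^B$ carries no bracket term, the corestriction of $Q_BF$ is simply the differential of $F^{\mathrm{cor}}$. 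Here I would be careful to absorb the d\'ecalage signs \eqref{equ.decalage} into the convolution differential $\delta=\delta_0+\delta_1$, so that $\delta(F^{\mathrm{cor}})=0$ matches the $L_\infty$ relation verbatim (this sign matching is where the actual work sits).

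The main step is then to check that $\theta_k(W(s)^{k+1})$ is $\delta$-closed, and this is precisely Lemma~\ref{lem.delta-W(x)}: under the hypothesis that $\pi s$ is a morphism of DG-Lie algebras one has $\delta W(s)^{k+1}\in[\mathbf{C}(L,A),\mathbf{C}(L,A)]+\mathbf{C}(L,I)^{(k+1)}\subseteq[\mathbf{C}(L,A),\mathbf{C}(L,A)]+\mathbf{C}(L,I)^{(k+1)}\mathbf{C}(L,A)$, the last inclusion because $\mathbf{C}(L,A)$ is unital. Since $\delta$ is a derivation preserving both $[\mathbf{C}(L,A),\mathbf{C}(L,A)]$ and the associative ideal $\mathbf{C}(L,I)^{(k+1)}\mathbf{C}(L,A)$ (as $\mathbf{C}(L,I)$ is a curved Lie ideal by Lemma~\ref{lem.bracketinC}), the class of $W(s)^{k+1}$ in the quotient defining the domain of $\theta_k$ is closed. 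As $\theta_k$ is a morphism of differential graded vector spaces, applying it gives $\delta(\theta_k(W(s)^{k+1}))=0$ in $\mathbf{C}(L,B)$, which by the first step exhibits $\theta_k(W(s)^{k+1})$ as the corestriction of an $L_\infty$ morphism $L\rightsquigarrow B$.

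It remains to read off the Taylor coefficients from the weight decomposition \eqref{eq.lin}. The $i$-th coefficient is $\theta_k(W(s)^{k+1}_i)$; these vanish for $i\ge 2k+2$ since $W(s)^{k+1}=\sum_{i=1}^{2k+1}W(s)^{k+1}_i$, each of the $k$ factors in $(R+t\delta(s)+t^2 s\star s)^k$ raising the weight by at most $2$ and the final $\star s$ by $1$. For $i=1$, again by \eqref{eq.lin}, $W(s)^{k+1}_1=R^k s/k!$, so on $b\in L[1]$ it evaluates to $\tfrac1{k!}\tr(R^k s(b))=\sigma^k_1(\pi s(b))$, the last equality because $s(b)$ lifts $\pi s(b)$ and $\sigma^k_1$ is independent of the lift; hence the linear coefficient is $\sigma^k_1\pi s$. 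The genuine obstacle I expect is the first paragraph: installing the precise dictionary, with correct d\'ecalage signs, between ``$\delta$-closed in $\mathbf{C}(L,B)$'' and ``corestriction of an $L_\infty$ morphism into the abelian target $B$''; once that dictionary is in place the theorem is an immediate consequence of Lemma~\ref{lem.delta-W(x)} and the weight grading.
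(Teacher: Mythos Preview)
Your proposal is correct and follows essentially the same approach as the paper: both reduce the claim to the observation that, for an abelian target, an $L_\infty$ morphism is the same thing as a $0$-cocycle in $\Hom^*_\K(S(L[1]),M[1])$, and then invoke Lemma~\ref{lem.delta-W(x)} to supply that cocycle, reading off the linear and vanishing Taylor coefficients from \eqref{eq.lin}. Your write-up is in fact slightly more explicit than the paper's about why the cocycle condition characterises $L_\infty$ morphisms into an abelian target (the $F$-coderivation argument) and about the weight count giving the bound $i\le 2k+1$.
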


\begin{proof} Recall that an $L_{\infty}$ morphism $f:L\rightsquigarrow M$ between two DG-Lie algebras is the same as a morphism of DG-coalgebras $F\colon S(L[1])\to S(M[1])$ between their bar constructions. By cofreeness of $S(M[1])$, the correspondence sending $F$ to its corestriction $f=pF$, where we denote by $p\colon S(M[1])\to M[1]$ the natural projection, establishes a bijection between the set of morphisms of graded coalgebras $F\colon S(L[1])\to S(M[1])$ and the set of morphism of graded vector spaces 
$f\colon S(L[1])\to M[1]$: in general, compatibility with the bar differentials translates into a countable sequence of algebraic equations in $f$, see e.g. \cite{yukawate,LMDT}. However, in the particular situation we are concerned with, that is, when the bracket on $M$ is trivial, the situation simplifies considerably, and we have that	$f\in \Hom^0_{\K}(S(L[1]), M[1])$ is the corestriction of an $L_{\infty}$ morphism $F\colon S(L[1])\to S(M[1])$ if and only if $r_1f = fQ$, where we denote by $r_1$ the shifted differential $r_1(m)=-d_M(m)$ on $M[1]$ and by $Q$ the bar differential on $S(L[1])$. In other words, when $M$ has trivial bracket the $L_\infty$ morphisms $L\rightsquigarrow M$ are in bijective correspondence with the set of $0$-cocycles in the  complex
$\Hom^*_{\K}(S(L[1]), M[1])$.


On the other hand, by Lemma~\ref{lem.delta-W(x)} the image of $W(s)^{k+1}$ onto $\frac{\mathbf{C}(L,A)}{[\mathbf{C}(L,A),\mathbf{C}(L,A)]+\mathbf{C}(L,I)^{(k+1)}\mathbf{C}(L,A)}$ is a degree $(2k+1)$ cocycle. Therefore, in order to conclude it is sufficient to define the desired $L_{\infty}$ morphism as the image of  
$\theta_k(W(s)^{k+1})$ under the natural isomorphism of differential graded vector spaces
\[ \Hom^*_\K\left(S(L[1]),\frac{A}{[A,A]+I^{(k+1)}A}\right)[2k+1]=
\Hom^*_\K\left(S(L[1]),\frac{A}{[A,A]+I^{(k+1)}A}[2k+1]\right).\]
Finally, the last two statements about the Taylor coefficients follow immediately from the definitions and \eqref{eq.lin}. 
\end{proof}

\begin{corollary}\label{cor.semireg} Given a curved Lie ideal $I$ of a curved DG-algebra $A$, let the symbol $B$ denote the quotient DG-Lie algebra $A/I$. Then, for every morphism of graded vector spaces  
	$s\colon B\to A$ such that $\pi s=\Id_{B}$,  
	the image of $W(s)^{k+1}\in \Hom_{\K}^{2k+1}(S(B[1]),A)$ onto  
	\[ \Hom^{2k+1}_{\K}\left(S(B[1]),\frac{A}{[A,A]+I^{(k+1)}A}\right)=\Hom^0_{\K}\left(S(B[1]),\frac{A}{[A,A]+I^{(k+1)}A}[2k+1]\right),\]
	is  an $L_{\infty}$ morphism  \[\sigma^k=(\sigma^k_1,\sigma^k_2,\ldots,\sigma^k_{2k+1},0,0,\ldots)\colon B\rightsquigarrow \dfrac{A}{[A,A]+I^{(k+1)}A}[2k]\] 
	with all Taylor coefficients of degree $\ge 2k+2$ vanishing and the morphism $\sigma^k_1$  of Definition~\ref{def:semiregmap} as linear Taylor coefficient.\end{corollary}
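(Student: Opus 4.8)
The plan is to recognize Corollary~\ref{cor.semireg} as the special case $L=B=A/I$ of Theorem~\ref{th.delta-W(x)}, so that no new analytic or combinatorial work is required: all the substance is already packaged in that theorem, and what remains is to check that the hypotheses specialize and to reconcile the two descriptions of the resulting morphism.

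First I would set up the specialization. Since $(A,I)$ is a curved DG-pair, the quotient $B=A/I$ is an honest (non-curved) DG-Lie algebra, as recorded just after Definition~\ref{def.curved-ideal}; hence $B$ is a legitimate choice for the DG-Lie algebra $L$ in Theorem~\ref{th.delta-W(x)}. The given section $s\colon B\to A$ is a morphism of graded vector spaces, so it determines a degree $+1$ element of $\mathbf{C}(B,A)_1=\Hom^*_{\K}(B[1],A)$, and the Chern-Simons forms $W(s)^{k+1}$ of equation~\eqref{equ.Wk} apply to it verbatim.

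Next I would verify the single hypothesis of the theorem, namely that $\pi s\colon B\to A/I=B$ is a morphism of DG-Lie algebras. Here the assumption $\pi s=\Id_{B}$ reduces this to the statement that the identity is a DG-Lie morphism, which is immediate. Theorem~\ref{th.delta-W(x)} then applies directly and yields that $\theta_k(W(s)^{k+1})$ is the corestriction of an $L_{\infty}$ morphism $B\rightsquigarrow \frac{A}{[A,A]+I^{(k+1)}A}[2k]$ whose linear Taylor coefficient is $\sigma^k_1\pi s=\sigma^k_1\Id_{B}=\sigma^k_1$, and all of whose Taylor coefficients of weight $\ge 2k+2$ vanish.

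Finally I would reconcile the phrasing of the corollary with that of the theorem. The ``image of $W(s)^{k+1}$ onto $\Hom^{2k+1}_{\K}(S(B[1]),A/([A,A]+I^{(k+1)}A))$'' is obtained by post-composing $W(s)^{k+1}$ with the projection $A\to A/([A,A]+I^{(k+1)}A)$; by Lemma~\ref{lem.bracketinC} this factors through the quotient defining $\theta_k$, so the resulting element is exactly $\theta_k(W(s)^{k+1})$. Under the décalage identification $\Hom^*_{\K}(S(B[1]),N)[2k+1]=\Hom^*_{\K}(S(B[1]),N[2k+1])$ with $N=A/([A,A]+I^{(k+1)}A)$, this is precisely the claimed $\sigma^k=(\sigma^k_1,\ldots,\sigma^k_{2k+1},0,0,\ldots)$; the weight bound in~\eqref{eq.lin}, stating that $W(s)^{k+1}$ has nonzero components only in weights $1\le i\le 2k+1$ with $W(s)^{k+1}_1=R^k s/k!$, delivers both the asserted range of Taylor coefficients and the identification of the linear one. \emph{The main obstacle is essentially absent}: the only delicate point is the bookkeeping of this last paragraph, ensuring that ``image onto the quotient Hom-complex'' coincides literally with $\theta_k(W(s)^{k+1})$ and that the degree shift $[2k+1]$ is applied consistently, which is a routine unwinding of definitions rather than a genuine difficulty.
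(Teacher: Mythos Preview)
Your proposal is correct and matches the paper's approach: the corollary is stated there as an immediate specialization of Theorem~\ref{th.delta-W(x)} with $L=B=A/I$ and $\pi s=\Id_B$, and the paper gives no further argument beyond that. Your additional bookkeeping about $\theta_k$ and the degree shift is accurate and simply makes explicit what the paper leaves implicit.
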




\begin{remark} It follows by Remark~\ref{rem.MC} that the induced push-forward on Maurer--Cartan elements
\begin{eqnarray}\nonumber &&\MC(\sigma^k)\colon\MC(B)\to\MC\left(\frac{A}{[A,A]+I^{(k+1)}A}[2k]\right) = Z^{2k+1}\left(\frac{A}{[A,A]+I^{(k+1)}A}\right),\\\nonumber  
&& \MC(\sigma^k)(x):=\sum_{i=1}^{2k+1}\frac{1}{i!}\sigma^k_i(x,\ldots,x) = \tr\Big(\operatorname{ev}_x\big(W^{k+1}(s)\big)\Big) = \tr\Big(W^{k+1}\big(s(x)\big)\Big),
\end{eqnarray}
sends the Maurer--Cartan element $x\in B^1$ to the residue modulo $\tr\big(I^{(k+1)}A\big)$ of the Chern--Simons class $\cs_{2k+1}\big(s(x)\big)\in A/[A,A]$.
\end{remark}
For general $s$ an explicit combinatorial description of the higher Taylor coefficients  $\sigma^k_{i}$, although possible, 
is quite complicated. In the next section we study these components under the additional assumption that $s\colon B\to A$ is a morphism of 
graded Lie algebras, or equivalently, by Lemma~\ref{lem.delta-W(x)}, that $\delta(s)+s\star s\in \mathbf{C}(B,I)_1$. This is often satisfied in concrete examples (for instance, in the geometric example that we shall consider in the following Section~\ref{sec.connection}).

\bigskip
\section{Explicit formulas in the split case.}
\label{sec.explicit}

Let $\K\langle Z_0,Z_1,Z_2\rangle$ be the associative algebra of noncommutative polynomials in $Z_0,Z_1,Z_2$. Following again \cite[p. 265]{GS}, we denote by $\Sigma[Z_0^p,Z_1^q,Z_2^r]\in \K\langle Z_0,Z_1,Z_2\rangle$, $p,q,r\in \N$, the 
sum of all possible words with $p$ letters $Z_0$, $q$ letters $Z_1$ and $r$ letters $Z_2$.
For instance,  $\Sigma[Z_0^0,Z_1^0,Z_2^0]=1$ and
$\Sigma[Z_0^1,Z_1^2,Z_2^0]=Z_0Z_1^2+Z_1Z_0Z_1+Z_1^2Z_0$.

For every $k\ge 0$ we define homogeneous polynomials 
$V^k(Z_0,Z_1,Z_2)\in \K\langle Z_0,Z_1,Z_2\rangle$
by the formula 
\[ V^k(Z_0,Z_1,Z_2)=\frac{1}{k!} \int_{0}^{1} (Z_0 + tZ_1 + (t^2-t)Z_2)^k\, dt\,.\]

Notice that for every curved DG-algebra $(A,d,R)$ and every $x\in A^1$ we have
\begin{equation}\label{eq.WvsV}
 W(x)^{k+1}=V^k(R,d(x)+x^2,x^2)x\,.
\end{equation}
It is also useful to assign to each variable $Z_i$ the weight $i$, and denote by 
$V^k=\sum_{i=0}^{2k}V^k_i$ the associated isobaric decomposition.
Notice that every monomial in $Z_0,Z_1,Z_2$ of weight $i$ with $r$ occurrences of the variable $Z_2$ (hence $i-2r\ge0$ occurrences of the variable $Z_1$) 
appears in $V^k_i$ with coefficient 
\[ \frac{1}{k!}\int_0^1t^{i-r}(t-1)^rdt=\frac{(-1)^rr!\,(i-r)!}{k!\,(i+1)!}=\frac{(-1)^r}{k!\,(i+1)}\binom{i}{r}^{-1}\,.\]
Therefore, for every $0\le i\le 2k$ 
\[ V^k_i=\sum_{\substack{p+q+r=k\\q+2r=i}}\frac{\,(-1)^rr!\,(i-r)!\,}{k!\,(i+1)!}\;
\Sigma[Z_0^p,Z_1^q,Z_2^r]\,.\]

For instance, one checks that for $0\le i\le 2k\le 6$ the above formula for $V^k_i$ gives:
\[\begin{split} V^0_0&= 1,\qquad V^1_0= Z_0,\qquad V^1_1=\frac{1}{2}Z_1,\qquad V^1_2 = -\frac{1}{6}Z_2,\\[4pt]
V^2_0&= \frac{1}{2}Z_0^2,\quad V^2_1= \frac{1}{4}\big(Z_0Z_1+Z_1Z_0\big),\qquad 
V^2_2=\frac{1}{6}Z_1^2-\frac{1}{12}\big(Z_0Z_2+Z_2Z_0\big),\\[4pt] 
V^2_3&= -\frac{1}{24}\big(Z_1Z_2+Z_2Z_1\big),\qquad V^2_4 = \frac{1}{60}\,Z_2^2,\\[4pt]
V^3_0&= \frac{1}{6}Z_0^3,\quad V^3_1= \frac{1}{12}\big(Z_0^2Z_1+Z_0Z_1Z_0+Z_1Z_0^2\big),\\[4pt] 
V^3_2&=\frac{1}{18}\big(Z_0Z_1^2+Z_1Z_0Z_1+Z_1^2Z_0\big)-\frac{1}{36}\big(Z_0^2Z_2+Z_0Z_2Z_0+Z_2Z^2_0\big),\\[4pt] 
V^3_3&= \frac{1}{24}Z_1^3 -\frac{1}{72}\big(Z_0Z_1Z_2+Z_0Z_2Z_1+Z_1Z_0Z_2+Z_1Z_2Z_0+Z_2Z_0Z_1+Z_2Z_1Z_0\big),\\[4pt] 
V^3_4&= -\frac{1}{120}\big(Z_1^2Z_2+Z_1Z_2Z_1+Z_2Z_1^2\big)+\frac{1}{180}\big(Z_0Z_2^2+Z_2Z_0Z_2+Z_2^2Z_0\big),\\[4pt]
V^3_5&= \frac{1}{360}\big(Z_1Z_2^2+Z_2Z_1Z_2+Z_2^2Z_1\big),\qquad V^3_6 = -\frac{1}{840} Z_2^3.
\end{split}\]

\begin{definition} A \emph{split curved DG-algebra} is the datum of a curved DG-algebra $A=(A,d,R)$ 
equipped with a direct sum decomposition 
$A= B\oplus I$,
where $B\subset A$ is a graded Lie subalgebra and $I\subset A$ is a curved Lie ideal.
\end{definition}

We shall denote by $\imath\colon B\to A$ the inclusion and by 
$P\colon A\to B$ the projection with kernel $I$ (in particular, $\imath,P$ are morphisms of graded Lie algebras) and by $P^\bot:=\operatorname{id}_A-P\colon A\to I$. We shall also denote by
\[ \debar:=Pd\colon B\to B,\qquad \nabla:=P^\bot d\colon B\to I.\]
Notice in particular that since $\operatorname{Ker}(P)=I$ is $d$-closed, then the identity $Pd=PdP$ holds, and in particular
\[\debar^2=(Pd)^2=Pd^2=P[R,-]=0,\]
since $I$ is a Lie  ideal and $R\in I$. Thus $(B,\debar,[-,-])$ is a  DG-Lie algebra and the natural map $(B,\debar)\to (A/I,d)$ is an isomorphism of DG-Lie algebras. 
Moreover,
\begin{equation}\label{equ.dnabla} 
d\nabla+\nabla\debar=[R,-]\colon B\to I,\end{equation}
since for every $b\in B$ we have 
\[ [R,b]=d^2(b)=d\nabla(b)+d\debar(b)=d\nabla(b)+\nabla\debar(b)+\debar\debar(b).\]

\begin{lemma} 
Let $A=B\oplus I$ be a split curved DG-algebra and consider the inclusion $\imath\colon B\hookrightarrow A$ as an element of $\mathbf{C}(B,A)_1$. Then for every $x,y\in B$ we have
\[\begin{split}
&\imath\star \imath\in \Hom^2_{\K}\Big(B[1]^{\odot 2},A\Big)= \mathbf{C}(B,A)^2_2, \qquad \imath\star \imath(x,y) = (-1)^{|x|-1}[x,y],\\
&\delta(\imath)+\imath\star \imath=\nabla\in \Hom^2_{\K}\Big(B[1],I\Big)\subset\mathbf{C}(B,I)^2_1, \qquad (\delta(\imath)+\imath\star \imath)(x) = 
\nabla(x),\\
&R\in \mathbf{C}(B,I)^2_0.\end{split}\]
In particular $W(\imath)^{k+1}=\sum_{i=1}^{2k+1}W(\imath)^{k+1}_i$ with 
\begin{equation*}
W(\imath)^{k+1}_i\in \mathbf{C}(B,A)_i,\qquad  W(\imath)^{k+1}_i=V^k_{i-1}(R,\nabla,\imath\star \imath)\star \imath.
\end{equation*}
\end{lemma}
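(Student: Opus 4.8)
The plan is to verify each of the three asserted formulas directly from the definitions of the convolution product $\star$ and the derivation $\delta = \delta_0 + \delta_1$, and then to combine them using equation \eqref{eq.WvsV} translated into the convolution algebra $\mathbf{C}(B,A)$.

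First I would establish the formula for $\imath \star \imath$. Since $\imath \in \mathbf{C}(B,A)_1$, the convolution product $\imath \star \imath$ lands in $\mathbf{C}(B,A)_2$, and evaluating on $x,y \in B[1]$ unwinds to the unshuffle sum computed in the proof of Lemma~\ref{lem.delta-W(x)}: one gets $(\imath\star\imath)(x,y) = (-1)^{|x|-1}[\imath(x),\imath(y)] = (-1)^{|x|-1}[x,y]$, where the last equality holds because $\imath$ is the inclusion. Next I would compute $\delta(\imath) = \delta_0(\imath) + \delta_1(\imath)$. For $\delta_0(\imath)(x) = d\,\imath(x) - \imath(\debar x) = d(x) - \debar(x)$; since $B$ is a graded Lie subalgebra with $\debar = Pd$ and $\nabla = P^\bot d$, we have $d(x) = \debar(x) + \nabla(x)$, whence $\delta_0(\imath)(x) = \nabla(x) \in I$. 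Since $s = \imath$ is a morphism of graded Lie algebras, by the final assertion of Lemma~\ref{lem.delta-W(x)} we have $\delta_1(\imath) + \imath\star\imath = 0$, so $\delta(\imath) + \imath\star\imath = \delta_0(\imath) = \nabla$, landing in $\mathbf{C}(B,I)_1$ as claimed. The statement $R \in \mathbf{C}(B,I)_0^2$ is immediate, since $R \in I^2$ and $R$ sits in weight $0$ by convention.

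The heart of the proof is the last formula for the weight components $W(\imath)^{k+1}_i$. The idea is that the identity \eqref{eq.WvsV}, namely $W(x)^{k+1} = V^k(R, d(x)+x^2, x^2)\,x$ in $A$, has a formal analogue in $\mathbf{C}(B,A)$ obtained by substituting into the noncommutative polynomial $V^k$ the three convolution-algebra elements $Z_0 \mapsto R$, $Z_1 \mapsto \delta(\imath)+\imath\star\imath = \nabla$, $Z_2 \mapsto \imath\star\imath$, and right-multiplying by $\imath$. Concretely, starting from the definition $W(\imath)^{k+1} = \frac{1}{k!}\int_0^1 (R + t\,\delta(\imath) + t^2\,\imath\star\imath)^k \star \imath\, dt$ and writing $\delta(\imath) = \nabla - \imath\star\imath$ (from the second formula just proved), the twisted curvature becomes $R + t\nabla + (t^2 - t)\,\imath\star\imath$, which matches exactly the integrand $Z_0 + tZ_1 + (t^2-t)Z_2$ defining $V^k$ under the above substitution. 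Performing the $t$-integral then yields $W(\imath)^{k+1} = V^k(R,\nabla,\imath\star\imath)\star\imath$.

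To extract the weight components I would use the weight gradation on $\mathbf{C}(B,A)$: $R$ has weight $0$, $\nabla$ has weight $1$, and $\imath\star\imath$ has weight $2$, so a monomial of $V^k$ of isobaric weight $i-1$ (in the grading assigning $Z_j$ the weight $j$) evaluates to an element of weight $i-1$, and right-multiplication by $\imath$ (weight $1$) shifts this to weight $i$. Since $V^k = \sum_{i=0}^{2k} V^k_i$ is precisely the isobaric decomposition, the weight-$i$ part of $W(\imath)^{k+1}$ is $V^k_{i-1}(R,\nabla,\imath\star\imath)\star\imath$, giving the stated range $1 \le i \le 2k+1$. The only subtle point, which I expect to be the main obstacle, is checking that the substitution is legitimate: one must confirm that the degrees of $R$, $\nabla$, $\imath\star\imath$ all coincide (they are all degree $2$ in $\mathbf{C}(B,A)$, so the signs in the Koszul convention for the associative $\star$-product behave exactly as for the commuting formal variables $Z_i$), so that noncommutative polynomial identities in $Z_0,Z_1,Z_2$ transfer verbatim to identities in the $\star$-algebra without sign corrections. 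Granting this, the weight decomposition of $W(\imath)^{k+1}$ follows immediately from that of $V^k$.
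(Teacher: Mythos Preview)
Your proof is correct and follows essentially the same approach as the paper: both invoke Lemma~\ref{lem.delta-W(x)} for the computation of $\imath\star\imath$ and to reduce $\delta(\imath)+\imath\star\imath$ to $\delta_0(\imath)$, then compute $\delta_0(\imath)(x)=d(x)-\debar(x)=\nabla(x)$, and finally appeal to \eqref{eq.WvsV} for the $V^k$ formula. Your write-up is actually more detailed than the paper's (which simply says ``the last claim follows from Equation~\eqref{eq.WvsV}''), in that you spell out the substitution $\delta(\imath)=\nabla-\imath\star\imath$ giving the integrand $R+t\nabla+(t^2-t)\imath\star\imath$, and you correctly note that the three arguments $R,\nabla,\imath\star\imath$ all have degree~$2$ so no Koszul signs interfere with the polynomial substitution.
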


\begin{proof} We have already proved that $\imath\star \imath(x,y) = (-1)^{|x|-1}[x,y]$ in the proof of Lemma~\ref{lem.delta-W(x)}. It remains to show that $(\delta(\imath)+\imath\star \imath)(x) = 
\nabla(x)$.
Again by Lemma~\ref{lem.delta-W(x)} we have $\delta(\imath)+\imath\star \imath=\delta_0(\imath)\in 
\mathbf{C}(B,I)_1$ and then for every $x\in B$ 
\[ (\delta(\imath)+\imath\star \imath)(x)=
\delta_0(\imath)(x)= (d \imath- \imath\debar)(x)=\nabla(x)\,.\]
The last claim follows from Equation \eqref{eq.WvsV}.\end{proof}

\begin{corollary}\label{cor.semiregsplit} 
Let $A=B\oplus I$ be a split curved DG-algebra with inclusion morphism $\imath\colon B\hookrightarrow A$.
For every $i,k$ with $1\le i\le 2k+1$ denote by $\sigma^k_i\in  
\Hom^0_{\K}\left(B[1]^{\odot i},\frac{A}{[A,A]+I^{(k+1)}A}[2k+1]\right)$ the image of
$V^k_{i-1}(R,\nabla,\imath\star \imath)\star \imath$ under the trace map  
\[\mathbf{C}(B,A)^{2k+1}_i\xrightarrow{\tr}\Hom^0_{\K}\left(B[1]^{\odot i},\frac{A}{[A,A]+I^{(k+1)}A}[2k+1]\right).\]
Then  
\[\sigma^k=(\sigma^k_1,\sigma^k_2,\ldots,\sigma^k_{2k+1},0,0,\ldots)\colon 
B\rightsquigarrow \frac{A}{[A,A]+I^{(k+1)}A}[2k]\]
is an $L_{\infty}$ morphism with linear component $\sigma^k_1$.
\end{corollary}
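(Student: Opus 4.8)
The plan is to obtain the statement as a direct specialization of the general $L_\infty$ lifting in Corollary~\ref{cor.semireg}, using the section furnished by the splitting, and then to extract the explicit Taylor coefficients from the preceding lemma. First I would observe that the decomposition $A=B\oplus I$ supplies a canonical section: since $\operatorname{Ker}(P)=I$ and $(B,\debar)\to(A/I,d)$ is an isomorphism of DG-Lie algebras, the inclusion $\imath\colon B\hookrightarrow A$ satisfies $\pi\imath=\Id_B$ under this identification, and it is moreover a morphism of graded Lie algebras because $B$ is a graded Lie subalgebra of $A$. Hence $\imath$ is a section to which Corollary~\ref{cor.semireg} applies, with $L=B$.

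The second step is to invoke Corollary~\ref{cor.semireg} verbatim: the image of $W(\imath)^{k+1}\in\Hom^{2k+1}_\K(S(B[1]),A)$ under the projection onto $\Hom^0_\K\!\left(S(B[1]),\frac{A}{[A,A]+I^{(k+1)}A}[2k+1]\right)$ is an $L_\infty$ morphism $\sigma^k\colon B\rightsquigarrow\frac{A}{[A,A]+I^{(k+1)}A}[2k]$. This single appeal already provides, for free, that $\sigma^k$ is an $L_\infty$ morphism, that its linear coefficient is $\sigma^k_1$, and that all coefficients of arity $\ge 2k+2$ vanish; the last point is also visible directly, since $W(\imath)^{k+1}$ has no weight exceeding $2k+1$.

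It then remains only to identify the individual Taylor coefficients. Here I would use the weight decomposition recorded in the preceding lemma, namely $W(\imath)^{k+1}=\sum_{i=1}^{2k+1}W(\imath)^{k+1}_i$ with $W(\imath)^{k+1}_i=V^k_{i-1}(R,\nabla,\imath\star\imath)\star\imath$. Since the arity-$i$ Taylor coefficient of $\sigma^k$ is the restriction of its corestriction to $B[1]^{\odot i}$, the coefficient $\sigma^k_i$ is exactly the image of the weight-$i$ piece $W(\imath)^{k+1}_i$ under the trace map $\mathbf{C}(B,A)^{2k+1}_i\xrightarrow{\tr}\Hom^0_\K\!\left(B[1]^{\odot i},\frac{A}{[A,A]+I^{(k+1)}A}[2k+1]\right)$, which is precisely the element named in the statement. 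For the linear component I would specialize to $i=1$: from the isobaric table $V^k_0=\frac{1}{k!}Z_0^k$, so $V^k_0(R,\nabla,\imath\star\imath)=\frac{1}{k!}R^k$ and $\big(\frac{1}{k!}R^k\star\imath\big)(x)=\frac{1}{k!}R^k x$ for $x\in B$; applying $\tr$ recovers $\sigma^k_1(x)=\frac{1}{k!}\tr(R^k x)$, the $k$th semiregularity map of Definition~\ref{def:semiregmap} under $B\cong A/I$.

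I do not anticipate a genuine obstacle: the content of this corollary is bookkeeping rather than a new idea, and the real work has already been done in Corollary~\ref{cor.semireg} and in the computation of $W(\imath)^{k+1}$. The only point demanding attention is the well-definedness of the trace map into the quotient $\frac{A}{[A,A]+I^{(k+1)}A}$, which rests on the inclusions $[\mathbf{C}(B,A),\mathbf{C}(B,A)]\subset\mathbf{C}(B,[A,A])$ and $\mathbf{C}(B,I)^{(k+1)}\mathbf{C}(B,A)\subset\mathbf{C}(B,I^{(k+1)}A)$ from Lemma~\ref{lem.bracketinC}; these are what guarantee that each $\sigma^k_i$ descends to a map valued in the quotient and depends only on $\tr\big(V^k_{i-1}(R,\nabla,\imath\star\imath)\star\imath\big)$.
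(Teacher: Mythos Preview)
Your proposal is correct and follows exactly the route intended by the paper: the corollary is an immediate specialization of Corollary~\ref{cor.semireg} to the section $s=\imath$ furnished by the splitting, combined with the weight decomposition $W(\imath)^{k+1}_i=V^k_{i-1}(R,\nabla,\imath\star\imath)\star\imath$ established in the preceding lemma. The paper gives no separate proof for precisely this reason.
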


\begin{example} 
	More explicitly, the Taylor coefficients 
	\[ \sigma^k_i:B[1]^{\odot i}\to \frac{A}{[A,A]+I^{k+1}A}[2k+1]\]
	are given on the diagonal, i.e., when all the arguments equal a certain $x\in B^1$, by the formula
	\[ \frac{1}{i!}\sigma^k_i(x,\ldots,x) = \tr\Big(V^k_{i-1}(R,\nabla(x),x^2)x\Big),\]
	and in general is given by the above formula via graded polarization. 
	

	For instance, using the previous explicit formulas for the non-commutative polynomials $V^k_{i-1}$ (together with the cyclic invariance of the trace), we see that for $k\le 3$ the $L_\infty$ morphism from Corollary~\ref{cor.semiregsplit} is given explicitly as follows. For $k=0$, we have the DG-Lie algebra morphism
	\[\sigma^0\colon (B,\debar,[-,-])\to\left(\frac{A}{[A,A]+IA},d,0\right),\qquad \sigma^0(x)=\tr(x).\]
	
	For $k=1$ we have the $L_\infty$ morphism 
	\[ \sigma^1=(\sigma^1_1,\sigma^1_2,\sigma^1_2,0,0,\ldots)\colon (B,\debar,[-,-])\rightsquigarrow \left(\frac{A}{[A,A]+I^{(2)}A}[2],d,0\right)\]
	given by: 
	\[\begin{split} 
	\sigma^1_1(x)&=\tr\Big(Rx\Big),\\
	\sigma^1_2(x_1,x_2)&=\sum_{\tau\in S_2}\frac{\varepsilon(\tau)}{2}\,\tr\Big(\nabla(x_{\tau(1)})x_{\tau(2)}\Big),\\
	\sigma^1_3(x_1,x_2,x_3)&=\sum_{\tau\in S_3}-\frac{\varepsilon(\tau)}{6}\,\tr\Big(x_{\tau(1)}x_{\tau(2)}x_{\tau(3)}\Big),\end{split}\]
	where we denote by $\varepsilon(\tau)$ the symmetric Koszul sign, defined by the identity $x_{\tau(1)}\odot\cdots\odot x_{\tau(i)}=\varepsilon(\tau)\,x_1\odot\cdots\odot x_i$ in the symmetric power $B[1]^{\odot i}$. Hence we recover, in a more general framework,  the formulas of \cite{linfsemireg} with the curvature in place of the Atiyah cocycle, see  Remark~\ref{rem.dipendenza.atiyah} below.

\medskip	
	
For $k=2$ we have the $L_\infty$ morphism 
	\[\sigma^2=(\sigma^2_1,\ldots,\sigma^2_5,0,0,\ldots)\colon(B,\debar[-,-])\rightsquigarrow \bigg(\frac{A}{[A,A]+I^{(3)}A}[4],d,0\bigg)\]
	given by:	
	\[\begin{split} 
	\sigma^2_1(x)&=\frac{1}{2}\,\tr\Big(R^2x\Big),\\
	\sigma^2_2(x_1,x_2)&=\sum_{\tau\in S_2}\frac{\varepsilon(\tau)}{4}\,\tr\Big(R\nabla(x_{\tau(1)})x_{\tau(2)}+\nabla(x_{\tau(1)})Rx_{\tau(2)}\Big),\\
	\sigma^2_3(x_1,x_2,x_3)&=\sum_{\tau\in S_3}\frac{\varepsilon(\tau)}{6}\,\tr\Big( (-1)^{|x_{\tau(1)}|+1}\nabla(x_{\tau(1)})\nabla(x_{\tau(2)})x_{\tau(3)}-Rx_{\tau(1)}x_{\tau(2)}x_{\tau(3)}\Big),\\
\sigma^2_4(x_1,x_2,x_3,x_4)&=\sum_{\tau\in S_4}-\frac{\varepsilon(\tau)}{12}\,\tr\Big(\nabla(x_{\tau(1)})x_{\tau(2)}x_{\tau(3)}x_{\tau(4)}\Big),\\
\sigma^2_5(x_1,x_2,x_3,x_4,x_5)&=\sum_{\tau\in S_5}\frac{\varepsilon(\tau)}{60}\,\tr\Big(x_{\tau(1)}x_{\tau(2)}x_{\tau(3)}x_{\tau(4)}x_{\tau(5)}\Big).\end{split}\]
\medskip	

Finally, for $k=3$ we have the $L_\infty$ morphism  
\[\sigma^3=(\sigma^3_1,\ldots,\sigma^3_7,0,0,\ldots)\colon (B,\debar,[-,-])\rightsquigarrow
\bigg(\frac{A}{[A,A]+I^{(4)}A}[6],d,0\bigg)\]
	given by \[ \sigma^3_i(x_1,\ldots,x_i)=\sum_{\tau\in S_i}\varepsilon(\tau)\tr(P_i(\tau)),\]
where: 
\begin{flalign*}
P_1(\tau)=\frac{1}{6}R^3x_1,&&
\end{flalign*}

\begin{flalign*} &P_2(\tau)=\frac{1}{12}\,\Big(R^2\nabla(x_{\tau(1)})+R\nabla(x_{\tau(1)})R+\nabla(x_{\tau(1)})R^2\Big)x_{\tau(2)}&&
\end{flalign*}

\begin{flalign*}
&P_3(\tau)=-\frac{1}{18}\,R^2x_{\tau(1)}x_{\tau(2)}x_{\tau(3)}-\frac{1}{36}\,Rx_{\tau(1)}x_{\tau(2)}Rx_{\tau(3)}&&\\
&\quad+\frac{1}{18}\,(-1)^{|x_{\tau(1)}|+1}\Big(R\nabla(x_{\tau(1)})\nabla(x_{\tau(2)})+\nabla(x_{\tau(1)})R\nabla(x_{\tau(2)})+\nabla(x_{\tau(1)})\nabla(x_{\tau(2)})R\Big)x_{\tau(3)},&&
\end{flalign*}

\begin{flalign*}
P_4(\tau)&=\frac{1}{24}\,\Big((-1)^{|x_{\tau(2)}|+1}\nabla(x_{\tau(1)})\nabla(x_{\tau(2)})\nabla(x_{\tau(3)})x_{\tau(4)}\Big)&&\\
&\quad-\frac{1}{36}\,\Big(\nabla(x_{\tau(1)})Rx_{\tau(2)}x_{\tau(3)}x_{\tau(4)}+R\nabla(x_{\tau(1)})x_{\tau(2)}x_{\tau(3)}x_{\tau(4)}\Big)&&\\
&\quad -\frac{1}{72}\,\Big((-1)^{|x_{\tau(1)}|+|x_{\tau(2)}|}Rx_{\tau(1)}x_{\tau(2)}\nabla(x_{\tau(3)})x_{\tau(4)}+\nabla(x_{\tau(1)})x_{\tau(2)}x_{\tau(3)}Rx_{\tau(4)}\Big),&&\end{flalign*}

\begin{flalign*}
P_5(\tau)&=\frac{1}{60}\,\Big(Rx_{\tau(1)}x_{\tau(2)}x_{\tau(3)}x_{\tau(4)}x_{\tau(5)}-(-1)^{|x_{\tau(1)}|+1}\nabla(x_{\tau(1)})\nabla(x_{\tau(2)})x_{\tau(3)}x_{\tau(4)}x_{\tau(5)}\Big)&&\\ 
&\quad -\frac{1}{120}\,\Big((-1)^{|x_{\tau(1)}|+|x_{\tau(2)}|+|x_{\tau(3)}|+1}\nabla(x_{\tau(1)})x_{\tau(2)}x_{\tau(3)}\nabla(x_{\tau(4)})x_{\tau(5)}\Big),&&
\end{flalign*}

\begin{flalign*}P_6(\tau)=\frac{1}{120}\,\Big(\nabla(x_{\tau(1)})x_{\tau(2)}x_{\tau(3)}x_{\tau(4)}x_{\tau(5)}x_{\tau(6)}\Big),&&
\end{flalign*}
\begin{flalign*}P_7(\tau)=-\frac{1}{840}\Big(x_{\tau(1)}x_{\tau(2)}x_{\tau(3)}x_{\tau(4)}x_{\tau(5)}x_{\tau(6)}x_{\tau(7)}\Big).&&
\end{flalign*}	

\end{example}

\begin{remark}\label{rem.dipendenza.atiyah} In the above setup, suppose that $I$ is a bilateral associative ideal. Then  
the morphism $\sigma^k_i$ depends only on the class of $R$ in $I/I^{(2)}$ if and only if either $i\le 2$ or $i\ge 2k$.
\end{remark}

\bigskip
\section{Connections of type $(1,0)$ and curved DG-pairs}
\label{sec.connection}

Let $X$ be a complex manifold and let  
\[ \sE^*\colon\qquad 0\to \sE^p\xrightarrow{\,\delta\,}\sE^{p+1}\xrightarrow{\,\delta\,}\cdots\xrightarrow{\,\delta\,}\sE^{q}\to 0,\qquad\quad p,q\in \Z,\quad \delta^2=0,\]
be a fixed finite complex of locally free sheaves of $\Oh_X$-modules. We denote by 
$\HOM^*_{\Oh_X}(\sE^*,\sE^*)$ the graded sheaf of $\Oh_X$-linear endomorphisms of $\sE^*$: 
\[ \HOM^*_{\Oh_X}(\sE^*,\sE^*)=\bigoplus_i \HOM^i_{\Oh_X}(\sE^*,\sE^*),\qquad 
\HOM^i_{\Oh_X}(\sE^*,\sE^*)=\prod_j\HOM_{\Oh_X}(\sE^j,\sE^{i+j}).\]
Then $\HOM^*_{\Oh_X}(\sE^*,\sE^*)$ is a sheaf of locally free DG-Lie algebras over $\Oh_X$, 
with the bracket equal to the graded commutator 
\[ [f,g]=fg-(-1)^{|f|\,|g|}gf\]
and the differential given by 
\[f\mapsto [\delta,f]=\delta f-(-1)^{|f|}f\delta.\]

For every $a,b,r$ 
denote by $\sA^{a,b}_X(\sE^r)\simeq \sA^{a,b}_X\otimes_{\Oh_X}\sE^r$ the sheaf of differential forms of type $(a,b)$ with coefficients in $\sE^r$, and by $\debar\colon \sA^{a,b}_X(\sE^r)\to \sA^{a,b+1}_X(\sE^r)$ the Dolbeault differential.

We consider  
\[ \sA^{*,*}_X(\sE^*)=\bigoplus_{a,b,r}\sA^{a,b}_X(\sE^r)\]
as a sheaf of graded $\sA^{*,*}_X$ modules, where the elements of $\sA^{a,b}_X(\sE^r)$ have degree $a+b+r$.
It is useful to use the dot symbol to denote the natural left multiplication map: 
\[ \sA^{*,*}_X\times \sA^{*,*}_X(\sE^*)\xrightarrow{\,\cdot\,}\sA^{*,*}_X(\sE^*).\]

The differential $\delta$ extends naturally  to a differential 
	\[ \delta\colon \sA^{a,b}_X(\sE^r)\to \sA^{a,b}_X(\sE^{r+1}),\qquad \delta(\phi\cdot e)=
	(-1)^{|\phi|}\,\phi\cdot \delta(e),\qquad \phi\in \sA_X^{a,b},\; e\in \sE^r.\]
	We have that $\debar^2=\delta^2=0$ and $[\debar,\delta]=\debar\delta+\delta\debar=0$, so that
	$\debar+\delta$ is a differential in $\sA^{*,*}_X(\sE^*)$.

In order to fix the notation, we write down the natural extension to locally free complexes of the 
notion of connection.

\begin{definition} Let $\sE^*$ as above: 
 
 \begin{enumerate}
 
\item  A connection  on $\sE^*$ is a $\C$-linear morphism of graded sheaves of degree $+1$
 \[ \nabla\colon \sE^*\to  \sA^{*,*}_X(\sE^*)\]
 such that $\nabla(fe)=d(f)\cdot e+f\cdot \nabla(e)$ for every $f\in \sO_X$, $e\in \sE^*$. Here $d$ denotes the de Rham differential.

\item A connection $\nabla$ as above is called \emph{of type $(1,0)$} if   
$\nabla(e)-\debar(e)\in \oplus_{k}\sA^{1,k}_X(\sE^{i-k})$ for every $i$ and every $e\in \sE^i$.
 \end{enumerate}
Thus, a connection $\nabla$ is of type $(1,0)$ if and only if 
$\nabla=\debar+\sum_{k}\nabla^{1,k}$, with  
$\nabla^{1,k}\colon \sE^i\to \sA^{1,k}_X(\sE^{i-k})$ for every $i$; in this case  
$\nabla^{1,k}$ is $\sO_X$-linear for every $k>0$.
\end{definition}

As in the nongraded case (see e.g. \cite{KoNo}), every connection $\nabla$ extends uniquely to a 
$\C$-linear morphism of graded sheaves of total degree $+1$
 \[ \nabla\colon \sA^{*,*}_X(\sE^*)\to  \sA^{*,*}_X(\sE^*)\]
 such that $\nabla(\phi\cdot \omega)=d(\phi)\cdot \omega+(-1)^{|\phi|}\phi\cdot \nabla(\omega)$, for every 
 $\phi\in \sA^{*,*}_X$, $\omega\in \sA^{*,*}_X(\sE^*)$. 
A connection $\nabla$ as above is called \emph{analytic} if  
$[\nabla,\delta+\debar]=0$; in particular,  a connection $\nabla=\debar+\sum_{k}\nabla^{1,k}$ of type $(1,0)$ is analytic if and only if 
$[\nabla^{1,0}, \delta]=0$ and 
	$[\nabla^{1,k}, \delta] + [\nabla^{1,k-1}, \debar]=0$ for every  $k>0$.

It is clear that  giving a connection $\nabla$ of type $(1,0)$ with $\nabla^{1,k}=0$ for every $k>0$ is the same as giving a connection compatible with the holomorphic structure on every $\sE^i$. In particular, connections of type $(1,0)$ always exist.   

With every connection  $\nabla=\debar+\sum_k\nabla^{1,k}\colon \sA^{*,*}_X(\sE^*)\to  \sA^{*,*}_X(\sE^*)$ of type $(1,0)$  is associated 
a curved DG-pair $(A,I)$ as in Definition~\ref{def.curved-ideal}, according to the following construction.

Denote by  
$A=A^{*,*}_X(\HOM^*_{\Oh_X}(\sE^*,\sE^*))$ the graded associative algebra of global differential forms with values in the graded  sheaf $\HOM^*_{\Oh_X}(\sE^*,\sE^*)$. 
Every element of $A$ may be naturally interpreted  as an  endomorphism of the sheaf $\sA^{*,*}_X(\sE^*)$   and an easy computation  shows that the adjoint operator 
\[ d=[\nabla+\delta,-]\colon A\to A\]
is a well defined derivation. Moreover $R=(\nabla+\delta)^2=\dfrac{1}{2}[\nabla+\delta,\nabla+\delta]$ belongs to $A$ and the triple $(A,d,R)$ is a curved DG-algebra. 

We refer to  
\cite{linfsemireg} for detailed proofs of the above facts. Notice that the definition of  connection of type $(1,0)$  given in  \cite{linfsemireg}  requires  $\nabla^{1,k}=0$ for every $k>0$, but the proofs work the same way since 
the  adjoint operator $[\nabla^{1,k},-]$ is an inner derivation of $A$ for every $k>0$.

Writing $\nabla=\debar+\sum_k\nabla^{1,k}$, since $(\debar+\delta)^2=0$ we have 
\[ R=R_1+R_2,\qquad R_i\in A^{i,*}_X(\HOM^{*}_{\Oh_X}(\sE^*,\sE^*)).\]
In particular, $R$ belongs to the ideal $I=A^{>0,*}_X(\HOM^{*}_{\Oh_X}(\sE^*,\sE^*))$. 
Moreover, $R_1=[\debar+\delta,\sum_k\nabla^{1,k}]$ and then  the connection $\nabla$ is analytic  if and only if $R_{1}=0$.

Since $I^{(k)}=I^{(k)}A=A^{\ge k,*}_X(\HOM^{*}_{\Oh_X}(\sE^*,\sE^*))$ the Atiyah class $\At(A,I)$ of the curved DG-pair is precisely the cohomology class of $R_{1}$ in the complex
$(A^{1,*}_X(\HOM^{*}_{\Oh_X}(\sE^*,\sE^*)),[\debar+\delta,-])$; notice that this complex is the Dolbeault resolution of the complex $\HOM^{*}_{\Oh_X}(\sE^*,\Omega^1_X\otimes\sE^*)$ and therefore
\[ \At(A,I)\in H^2(A^{1,*}_X(\HOM^{*}_{\Oh_X}(\sE^*,\sE^*)))=\EExt^1_X(\sE^*,\Omega^1_X\otimes \sE^*).\] 
Since two connections of type $(1,0)$ differ by a degree 1 element of $A^{1,*}_X(\HOM^{*}_{\Oh_X}(\sE^*,\sE^*))$, the Atiyah class is independent from the choice of the connection. 
Conversely, for every degree 1 element $\phi\in A^{1,*}_X(\HOM^{*}_{\Oh_X}(\sE^*,\sE^*))$ the map $\widehat{\nabla}=\nabla+\phi$ is again a connection of type $(1,0)$ with 1-component of the curvature
$\widehat{R}_1=R_1+[\delta+\debar,\phi]$: it follows that  $\At(A,I)=0$ if and only if $\sE^*$ admits an 
analytic connection of type $(1,0)$.
As already pointed out in \cite{linfsemireg}, the class $\At(A,I)$ is the same as the usual Atiyah class of the complex $\sE^*$ as an object in the derived category of $X$, see also \cite{At} and \cite[Section 10.1]{HL}.

\begin{remark} Let $x\in A$ be such that $[\debar+\delta,x]=0$, then 
$[R_1,x]=[\debar+\delta,[\sum_k \nabla^{1,k},x]]$ and this immediately implies that the Atiyah class is a central element in the cohomology of the differential graded algebra $\operatorname{Gr}_I(A)=\oplus_{k}\dfrac{I^{(k)}}{I^{(k+1)}}$, cf. \cite[Prop. 3.12]{BF}.
\end{remark}

Finally, the same computation of \cite[Lemma 2.6]{linfsemireg} shows that the $A^{*,*}_X$-linear extension of the usual trace map
\[ \Tr\colon  A^{*,*}_X(\HOM^*_{\Oh_X}(\sE^*,\sE^*))\to A^{*,*}_X, \qquad \Tr(\phi\cdot f)=\phi\Tr(f),\]
is a trace map in the sense of Definition~\ref{def.tracemap}. 

\bigskip

Assume now that the complex $\sE^*$ is a resolution of  
a coherent sheaf $\sF$. Then $\At(A,I)$ is equal to the Atiyah class $\At(\sF)$ of $\sF$ and 
the DG-Lie algebra 
\[ \frac{A}{I}=A_X^{0,*}(\HOM^*_{\sO_X}(\sE^*,\sE^*))\]
is precisely the Dolbeault model of the DG-Lie algebra controlling deformations of $\sF$.

Consider the trace map $\Tr\colon A\to A^{*,*}_X$: for every $k\ge 0$ we have 
$\Tr(I^{(k+1)}A)=A^{>k,*}_X$, and then the map $\sigma_1^k$ from Definition \ref{def:semiregmap} becomes
\[ \sigma^k_1\colon A_X^{0,*}(\HOM^*_{\sO_X}(\sE^*,\sE^*))\to A_X^{\le k,*}[2k],
\qquad \sigma_1^k(x)=\frac{1}{k!}\Tr(R_1^kx).\]

Therefore, at the cohomology level $\sigma^k_1$ induces the composition of
\[ \Ext^*_X(\sF,\sF)\to H^{*+k}(X,\Omega^k_X), \qquad x\mapsto \frac{1}{k!}\Tr(\At(\sF)^k\cdot x),\]
with the natural map $j\colon H^{*+k}(X,\Omega^k_X)\to \HH^{2k+*}(A_X^{\le k,*})$.

By Corollary~\ref{cor.main1} the map $\sigma^k_1$ is the linear component of an $L_{\infty}$ morphism, and since the abelian DG-Lie algebra $A_X^{\le k,*}[2k]$ has trivial obstructions we immediately obtain the following result.

\begin{corollary}\label{cor.main2} Let $\sF$ be a coherent sheaf on a complex manifold $X$ admitting a locally free resolution. 
Then for every $k\ge 0$ the semiregularity map 
\[ \Ext^2_X(\sF,\sF)\to \HH^{2k+2}(A_X^{\le k,*}),\qquad x\mapsto \frac{1}{k!}\Tr(\At(\sF)^k\cdot x),\]
annihilates obstructions to deformations of $\sF$.
\end{corollary}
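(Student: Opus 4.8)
The plan is to read Corollary~\ref{cor.main2} as the specialisation of Corollary~\ref{cor.main1} to the curved DG-pair and trace map produced in this section, combined with the standard principle that an $L_\infty$ morphism landing in an abelian DG-Lie algebra annihilates obstructions. By hypothesis I may fix a locally free resolution $\sE^*$ of $\sF$ and a connection of type $(1,0)$ on it; this yields the curved DG-algebra $A=A^{*,*}_X(\HOM^*_{\Oh_X}(\sE^*,\sE^*))$, the curved Lie ideal $I=A^{>0,*}_X(\HOM^*_{\Oh_X}(\sE^*,\sE^*))$ and the trace map $\Tr\colon A\to A^{*,*}_X$, together with the three identifications already recorded above: $A/I=A_X^{0,*}(\HOM^*_{\sO_X}(\sE^*,\sE^*))$ is the Dolbeault model of the DG-Lie algebra controlling deformations of $\sF$, so that its obstruction space is $H^2(A/I)=\Ext^2_X(\sF,\sF)$; $\Tr(I^{(k+1)}A)=A^{>k,*}_X$, so that the quotient complex $C_k$ of Corollary~\ref{cor.main1} is $A^{\le k,*}_X$; and $\At(A,I)=\At(\sF)$.

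With these in place I would apply Corollary~\ref{cor.main1} to obtain, for every $k\ge 0$, an $L_\infty$ morphism
\[ \eta^k\colon \frac{A}{I}\rightsquigarrow A^{\le k,*}_X[2k] \]
whose linear component $\eta^k_1=\sigma^k_1$ induces in degree-$2$ cohomology precisely the map of the statement, namely $x\mapsto \frac{1}{k!}\Tr(\At(\sF)^k\cdot x)$ followed by the natural map $j$ into $\HH^{2k+2}(A_X^{\le k,*})$. Since the target $A^{\le k,*}_X[2k]$ carries the trivial bracket, $\eta^k$ induces a morphism of deformation functors $\Def_{A/I}\to \Def_{A^{\le k,*}_X[2k]}$ which is compatible with the obstruction theories, in the sense that for each small extension the obstruction of a deformation of $\sF$ is carried by $H^2(\sigma^k_1)$ to the obstruction of its image. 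As the target functor is smooth---an abelian DG-Lie algebra has no obstructions---these images vanish, so every obstruction of $\Def_{A/I}$ lies in $\ker H^2(\sigma^k_1)$, which is exactly the desired statement.

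Each of these steps is either cited from earlier in the paper or is part of the standard dictionary between DG-Lie algebras and deformation functors, so there is no serious analytic obstacle; the argument is essentially the discussion following Corollary~\ref{cor.main1} applied in the present geometric setting. The only point requiring attention is the bookkeeping that identifies $H^2(\sigma^k_1)$ with the explicit semiregularity formula and that confirms $\Ext^2_X(\sF,\sF)$ to be the obstruction space of the Dolbeault model. Both rest on the identification of $A/I$ with the standard DG-Lie algebra governing deformations of $\sF$ and on the fact that $L_\infty$ morphisms respect obstruction maps via their linear components, which I would invoke from the references (e.g.\ \cite{ManRendiconti,Man}) rather than reprove here.
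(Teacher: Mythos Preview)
Your proposal is correct and follows exactly the approach of the paper: the corollary is deduced by specialising Corollary~\ref{cor.main1} to the curved DG-pair and trace map constructed in this section, together with the general principle that an $L_\infty$ morphism into an abelian DG-Lie algebra annihilates obstructions. The only point the paper adds in its one-line proof---and which you might mention explicitly---is that the construction of Section~\ref{sec.connection} requires a \emph{finite} locally free resolution, and Hilbert's syzygy theorem on the smooth manifold $X$ ensures that any locally free resolution can be truncated to a finite one.
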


\begin{proof} Since $X$ is assumed smooth, by Hilbert's syzygy theorem, if $\sF$ admits a locally free resolution, then it also admits a finite locally free resolution.
\end{proof}

For $k=0$ Corollary~\ref{cor.main2} is a classical result by Mukai and Artamkin \cite{Arta}, while the case $k=1$ was proved  in \cite{linfsemireg} by constructing explicitly an  $L_\infty$ lifting of $\sigma^1_1$.

\begin{corollary}\label{cor.main3} Let $\sF$ be a coherent sheaf on a complex projective manifold  $X$. 
Then for every $k\ge 0$ the semiregularity map 
\[ \Ext^2_X(\sF,\sF)\to H^{k+2}(X,\Omega^{k}_X),\qquad x\mapsto \frac{1}{k!}\Tr(\At(\sF)^k\cdot x),\]
annihilates obstructions to deformations of $\sF$.
\end{corollary}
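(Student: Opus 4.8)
The plan is to reduce the statement to Corollary~\ref{cor.main2} by exhibiting the target $H^{k+2}(X,\Omega_X^k)$ as a subspace of $\HH^{2k+2}(A_X^{\le k,*})$. Since $X$ is smooth and projective, every coherent sheaf admits a finite locally free resolution, so Corollary~\ref{cor.main2} applies: the map $\Ext_X^2(\sF,\sF)\to \HH^{2k+2}(A_X^{\le k,*})$, $x\mapsto \frac{1}{k!}\Tr(\At(\sF)^k\cdot x)$, annihilates obstructions. As recorded in the discussion preceding Corollary~\ref{cor.main2}, this map is the composition $j\circ\tau_k$, where $\tau_k$ is the map in the present statement and $j\colon H^{k+2}(X,\Omega_X^k)\to \HH^{2k+2}(A_X^{\le k,*})$ is the natural map. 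Thus it suffices to prove that $j$ is injective when $X$ is projective: granting this, $j(\tau_k(o))=0$ for every obstruction $o$ forces $\tau_k(o)=0$.

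First I would describe $j$ concretely. Because $\de$ raises the holomorphic degree $p$ while $\debar$ preserves it, the graded piece $A_X^{>k,*}$ is a $d$-subcomplex of $(A_X^{*,*},d)$, with quotient $A_X^{\le k,*}$; moreover $(A_X^{k,*},\debar)$ is itself a subcomplex of $(A_X^{\le k,*},d)$, since the $\de$-component of $d$ sends $A_X^{k,*}$ into $A_X^{k+1,*}$, which vanishes in the quotient. The map $j$ is the one induced on cohomology by this inclusion, under the identification $H^{k+2}(X,\Omega_X^k)=H^{2k+2}(A_X^{k,*},\debar)$ of a Dolbeault group with the cohomology of the top graded piece.

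The key input is the Hodge theory of compact K\"{a}hler manifolds: since $X$ is projective, the Fr\"{o}licher (Hodge-to-de Rham) spectral sequence degenerates at $E_1$. Applying this to the short exact sequence of complexes
\[ 0\to A_X^{\ge k+1,*}\to A_X^{*,*}\to A_X^{\le k,*}\to 0, \]
degeneration identifies $\HH^n(A_X^{\ge k+1,*})$ with the Hodge-filtration piece $F^{k+1}H^n(X,\C)$ and shows that it injects into $H^n(X,\C)$; consequently
\[ \HH^n(A_X^{\le k,*})\cong H^n(X,\C)/F^{k+1}H^n(X,\C)\cong \bigoplus_{p\le k}H^{p,n-p}(X). \]
For $n=2k+2$ the top summand is $H^{k,k+2}(X)=H^{k+2}(X,\Omega_X^k)$, and under this identification $j$ is exactly the inclusion of that summand; equivalently, a class in $H^{k+2}(X,\Omega_X^k)$ is represented by a harmonic $(k,k+2)$-form, which on a K\"{a}hler manifold is automatically $\de$- and $\debar$-closed and therefore defines a class in $\HH^{2k+2}(A_X^{\le k,*})$ that vanishes only if the original class did. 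Hence $j$ is injective and the corollary follows.

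I expect the only delicate point to be the bookkeeping that makes the identification of $j$ with the inclusion of a Hodge summand rigorous: one must check that the subcomplex inclusion above really induces, under degeneration, the inclusion of the $(k,k+2)$-component, equivalently that the connecting homomorphism $\HH^{2k+1}(A_X^{\le k-1,*})\to H^{k+2}(X,\Omega_X^k)$ in the long exact sequence of $0\to A_X^{k,*}\to A_X^{\le k,*}\to A_X^{\le k-1,*}\to 0$ vanishes. This vanishing is again a direct consequence of $E_1$-degeneration, and everything else reduces to Corollary~\ref{cor.main2} and standard compact K\"{a}hler Hodge theory.
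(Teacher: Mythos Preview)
Your proposal is correct and follows essentially the same approach as the paper: reduce to Corollary~\ref{cor.main2} via existence of locally free resolutions on a projective manifold, and then use $E_1$-degeneration of the Hodge-to-de Rham spectral sequence to conclude that $j\colon H^{k+2}(X,\Omega_X^k)\to \HH^{2k+2}(A_X^{\le k,*})$ is injective. The paper's proof is two sentences stating exactly these two ingredients; you have simply unpacked the Hodge-theoretic justification for the injectivity of $j$ in more detail.
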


\begin{proof} Since $X$ is projective every coherent sheaf admits a locally free resolution. 
Moreover, the Hodge to de Rham spectral sequence degenerates at $E_1$ and therefore the natural map 
$H^{k+2}(X,\Omega^{k}_X)\to \HH^{2k+2}(A_X^{\le k,*})$ is injective.\end{proof}

\subsection{Acknowledgements.} We thank Victor Ginzburg for pointing out to our attention Quillen's paper \cite{Quillen88}.  This work was carried out in the framework of the PRIN project  \lq\lq Moduli  and Lie theory\rq\rq\   2017YRA3LK.


\begin{thebibliography}{99}

\bibitem{Arta} Artamkin, I.V.: {On deformations of sheaves}. Math. USSR Izvestiya \textbf{32} No. 3,  663--668  (1989).
		
\bibitem{At} {{Atiyah}, M.~F.:} {Complex analytic {connection}s in fibre bundles.}
Trans. Amer. Math. Soc. {\bf 85} (1957), 181--207.
		
		
\bibitem{yukawate} Bandiera R.,  Manetti M.:
Algebraic models of local period maps and Yukawa algebras.  
Lett. Math. Phys. \textbf{108}, 2055--2097, (2018).
		

\bibitem{LiePairs} Bandiera, R., Lepri, E.,  Manetti, M.:
Semiregularity maps and deformations of modules over Lie algebroids. 
Pure and Applied Mathematics Quarterly (forthcoming), 
\href{https://arxiv.org/abs/2208.00694}{arxiv:2208.00694} (2022).



\bibitem{bloch} 
Bloch, S.: Semi-regularity and de Rham cohomology. Invent.
Math. \textbf{17},  51--66 (1972).


\bibitem{BF}  Buchweitz, R.-O.,   Flenner, H.: {A Semiregularity Map for Modules and Applications to Deformations.} Compositio Math. \textbf{137}, 135--210, (2003) \href{https://arxiv.org/abs/math/9912245}{arXiv:9912245}.
		

\bibitem{CS} Chern, S.-S., Simons, J.: Characteristic Forms and Geometric Invariants.
Annals of Mathematics, Second Series, Vol. 99, No. 1,  48--69 (1974).





		




\bibitem{GS} Gelfand, I.M., Smirnov, M.M.:  The Algebra of Chern--Simons Classes, the Poisson Bracket on it, and the Action of the Gauge Group. In: Brylinski JL., Brylinski R., Guillemin V., Kac V. (eds) Lie Theory and Geometry. Progress in Mathematics, vol 123. Birkh\"{a}user, Boston, MA. 
\href{https://doi.org/10.1007/978-1-4612-0261-5_9}{doi:10.1007/978-1-4612-0261-5\_9}, (1994).







\bibitem{Ha10}  Hartshorne, R.: Deformation theory. 
Graduate Texts in Mathematics, vol. \textbf{257}, Springer, New York, (2010).






\bibitem{HL} Huybrechts, D.,  Lehn, M.:  The Geometry of Moduli Spaces of Sheaves.  
Cambridge University Press (2010).



\bibitem{semireg} Iacono, D.,  Manetti, M.: {Semiregularity and obstructions of complete intersections.} Advances in Mathematics 235 (2013) 92--125. \href{https://arxiv.org/abs/1112.0425}{arXiv:1112.0425}. 
		

\bibitem{DMcoppie} Iacono, D.,  Manetti, M.: {On deformations of pairs (manifold, coherent sheaf).} 
Canad. J. Math. \textbf{71}, 1209--1241 (2019); \href{https://arxiv.org/abs/1707.06612}{arXiv:1707.06612}.
	


\bibitem{KoNo} Kobayashi, S.,  Nomizu,  K.: Foundations of Differential Geometry, Volume 2.
Wiley and Sons, New York, (1969). 

\bibitem{kodaspen} Kodaira, K.,  Spencer, D.C.: {A theorem of completeness of
characteristic systems of complete continuous systems.}  Amer.  J.
Math.  \textbf{81},  477--500 (1959).


\bibitem{LadaMarkl} Lada, T.,  Markl, M.: 
Strongly homotopy Lie algebras. 
Comm. Algebra \textbf{23}, 2147--2161  (1995).




\bibitem{linfsemireg} Lepri, E., Manetti,  M.: Connections and $L_\infty$ liftings of semiregularity maps. 
Journal of Geometry and Physics \textbf{168} (2021) 104303; 
\href{https://arxiv.org/abs/2102.05016}{arXiv:2102.05016}







\bibitem{ManRendiconti}  Manetti, M.:
Lectures on deformations of complex manifolds. 
\href{https://www1.mat.uniroma1.it/ricerca/rendiconti/visualizza-volume.cgi?anno=2004&volume=24&issue=1}{Rend. Mat.
Appl. (7) \textbf{24},  1--183 (2004)}.


\bibitem{ManettiSemireg} Manetti, M.:
Lie description of higher obstructions to deforming submanifolds.
Ann. Sc. Norm. Super. Pisa Cl. Sci.
\textbf{6}, 631--659  (2007).


\bibitem{Man}  Manetti, M.: Differential graded Lie algebras and formal deformation theory. Algebraic Geometry: Seattle 2005. Proc. Sympos. Pure Math. 80, 785--810 (2009).



\bibitem{LMDT} Manetti, M.:
Lie methods in deformation theory. Springer Monographs in Mathematics (2022). 
 



\bibitem{Posi} Positsel'skii, L.E.: Nonhomogeneous quadratic duality and curvature. 
Funct Anal Its Appl \textbf{27}, 197--204 (1993). 


\bibitem{Pri} Pridham, J.P.: {Semiregularity as a consequence of Goodwillie's theorem.}
\href{https://arxiv.org/abs/1208.3111}{arXiv:1208.3111} (2012).



\bibitem{Quillen88} Quillen, D.: Chern--Simons forms and cyclic cohomology. The interface of mathematics and particle physics (Oxford, 1988), 117--134, Inst. Math. Appl. Conf. Ser. New Ser., 24, Oxford Univ. Press, New York, (1990).



\bibitem{severi} Severi, F.: {Sul teorema fondamentale dei sistemi continui di
curve sopra una superficie algebrica.}  Ann.  Mat.  Pura
Appl. \textbf{23},  149--181  (1944).





\end{thebibliography}
\end{document}